\newtheorem{theorem}{Theorem}[section]
\newtheorem{corollary}[theorem]{Corollary}
\newtheorem{definition}[theorem]{Definition}
\newtheorem{example}[theorem]{Example}
\newtheorem{note}[theorem]{Note}
\newenvironment{proof}[1][Proof]{\noindent\textbf{#1.} }{\ \rule{0.5em}{0.5em}}
\begin{document}
\begin{center}
  \large
COMPACTNESS WITH IDEALS\\[1ex]
\large
Manoranjan Singha$^*$, Sima Roy$^{**}$\\[2ex]
\scriptsize
\textsuperscript{*}\emph{Department of Mathematics, University of North Bengal, Darjeeling-734013, India}

\textsuperscript{**}\emph{Department of Mathematics, Raja Rammohun Roy Mahavidyalaya, Hooghly-712406, India}

\end{center}

\date{}
\hrule
\vspace{0.2cm}
\begin{flushleft}
  \textbf{ABSTRACT}
\end{flushleft}

One of the main obstacles to study compactness in topological spaces via ideals was the definition of ideal convergence of subsequences as in the existing literature according to which subsequence of an ideal convergent sequence may fail to be ideal convergent with respect to same ideal. This obstacle has been get removed in this article and notions of $\mathcal{I}$-compactness as well as $\mathcal{I}^*$-compactness of topological spaces have been introduced and studied to some extent. Involvement of $\mathcal{I}$-nonthin subsequences in the definition of $\mathcal{I}$ and $\mathcal{I}^*$-compactness make them different from compactness even in metric spaces.\\

\textbf{Key words}: Ideals of sets, $\mathcal{I}$-nonthin subsequences, $\mathcal{I}$-compactness, $\mathcal{I}^*$-compactness, shrinking condition$(A)$, shrinking condition$(B)$\\

\textbf{MSC}: primary 54A20, secondary 40A35
\vspace{0.2cm}
\hrule
\footnotetext{
$^*$E-mail: manoranjan$\_$singha@rediffmail.com
\vspace{0.1cm}

$^{**}$E-mail: rs$\_$sima@nbu.ac.in
}

\begin{center}
  \section{Introduction}
\end{center}

The year 1951 saw the raise of statistical convergence by H. Fast\cite{Fast} which was applied to study integrability of certain functions and related summability methods in the year 1959 for the first time by  I. J. Schoenberg\cite{Schoenberg}. During last four decades of the 19th century many mathematicians like J. S. Connor\cite{Connor}, T. Salat\cite{Salatb}, J. Cincura\cite{Cincura}, M. Ma\v{c}aj\cite{Macaj}, G. Di Maio\cite{Maio}, Marek Balcerzak \cite{Balcerzak}, P. Das\cite{Das1}, B. K. Lahiri\cite{Lahiria}, K. Demirci\cite{Demirci}, etc. explored and generalized that concept in various directions. The most generalized version of such convergences is ideal convergence which plays the main role in this article.

Let's begin with some basic definitions and results.

For any non-empty set $X$, a family $\mathcal{I}\subset 2^X$ is called an ideal if $(1)$ $\emptyset \in \mathcal{I}$, $(2)$ $A,B\in \mathcal{I}$ implies $A\cup B\in \mathcal{I}$, and $(3)$ $A\in \mathcal{I},B\subset A$ implies $B\in \mathcal{I}$ \cite{Kuratowski}. An ideal $\mathcal{I}$ is called non-trivial if $\mathcal{I}\neq \{\emptyset\}$ and $X\notin \mathcal{I}.$ A non-trivial ideal $\mathcal{I}\subset 2^X$ is called admissible if it contains all the singleton sets \cite{Kuratowski}. Various examples of non-trivial admissible ideals are given in \cite{Kostyrkoa}.

A sequence $(x_n)_{n\in\mathbb{N}}$ in a topological space $X$ is said to be $\mathcal{I}$-convergent to $\xi\in X$ $(\xi=\mathcal{I}-\lim_{n\to\infty} x_n)$ if and only if for any open set $U$ containing $\xi$,  $\{n\in\mathbb{N}: x_n\notin U\}\in\mathcal{I}$. The element $\xi$ is called the $\mathcal{I}$-limit of the sequence $(x_n)_{n\in\mathbb{N}}$ \cite{Lahirib}.
If $\mathcal{I}$ is an admissible ideal, then the usual convergence in $X$ implies $\mathcal{I}$-convergence in $X$ \cite{Kuratowski}.
Another concept of convergence (called $\mathcal{I}^*$-convergence) closely related to $\mathcal{I}$-convergence.

A sequence $(x_n)_{n\in\mathbb{N}}$ in a topological space $X$ is said to be $\mathcal{I}^*$-convergent to $\xi\in X$ if and only if there exists a set $M\in \mathcal{F}(\mathcal{I})$ $(i.e. \mathbb{N}\setminus M\in \mathcal{I}),$ $M=\{m_1<m_2<...<m_k<...\}$ such that $\lim_{k\to \infty} x_{m_k}=\xi$ \cite{Lahirib}.

The concept of $\mathcal{I}$-convergence of sequences has been extended from the real number space to a metric space \cite{Kostyrkoa}, to a normed linear space \cite{Salatb}, to a finite dimensional space \cite{Pehlivan} even to a topological space \cite{Lahirib}.

In this article $\mathcal{I}$ is a nontrivial admissible ideal on $\mathbb{N},$ unless otherwise stated.

\section{Main Results}

 As stated $\mathcal{I}$-nonthin subsequences play main role in this article, let's begin with few words about nonthinness of subsequences introduced by J. A. Fridy \cite{Fridy}  using density of index sets of the subsequences and define nonthin subsequence with respect to an ideal and go on. As in \cite{Fridy} a subsequence $(x_n)_{n\in K}$ of  a sequence $x=(x_n)_{n\in\mathbb{N}}$ of real numbers is called a thin subsequence if $K$ has density zero otherwise it is known as nonthin.
 For simplicity of writing as well as reading, let's call that a sequence is a mapping whose domain is a cofinal subset of $\mathbb{N}$. Let $x=(x_n)_{n\in L}$ be a sequence in a topological space $X$ and $M$ be a cofinal subset of $L$. Then call $(x_n)_{n\in M}$ a subsequence of $x=(x_n)_{n\in L}$.

 \begin{definition}
 A sequence $x=(x_n)_{n\in M}$ in a topological space $X$ is called $\mathcal{I}$-thin, where $\mathcal{I}$ is a nontrivial admissible ideal on $\mathbb{N}$ if $M\in \mathcal{I}$ ; otherwise it is called $\mathcal{I}$-nonthin.
 \end{definition}
\begin{note}
Let $\mathcal{I}$ be a nontrivial admissible ideal on $\mathbb{N},$ $\mathcal{I}/_M=$ $\{A\cap M; A\in \mathcal{I}\}$ is an ideal on $M.$ $\mathcal{I}/_M$ is nontrivial if $M\notin \mathcal{I}$.
\end{note}

\begin{definition}
Let $(X,\tau)$ be a topological space. For $A\subset X$ and $x\in X$, the $\mathcal{I}$-closure of $A$ is denoted by $\overline{A}^\mathcal{I}=\{x\in X;$ there exists an $\mathcal{I}$-nonthin sequence $(x_n)_{n\in L}$ in $A$ that $\mathcal{I}/_L$-converges to $x\}$  and the $\mathcal{I}^*$-closure of $A$ is denoted by $\overline{A}^{\mathcal{I}^*}=\{x\in X;$ there exists an $\mathcal{I}$-nonthin sequence $(x_n)_{n\in L}$ in $A$ that ${(\mathcal{I}/_L)}^*$-converges to $x\}$.
\end{definition}

$\blacklozenge$ \textbf{Properties of $\mathcal{I}$-closure and $\mathcal{I}^*$-closure}
\begin{itemize}
\item[I.] $\overline{\emptyset}^\mathcal{I}=\emptyset.$
\item[II.] For $a\in A,$ the constant sequence $(a,a,a,\ldots)~\mathcal{I}$-converges to $a,$ this implies $a\in\overline{A}^\mathcal{I}.$ Hence $A\subset\overline{A}^\mathcal{I}.$
\item[III.] Let $A,B$ be subsets of $X$. It's easy to see that $\overline{A}^\mathcal{I}\cup\overline{B}^\mathcal{I}\subset\overline{A\cup B}^\mathcal{I}.$ Now let $x\in {\overline{A\cup B}}^\mathcal{I},$ there exists an $\mathcal{I}$-nonthin sequence $(x_n)_{n\in L}$ in $A\cup B$ such that $x_n\to_{\mathcal{I}/_L}x$. Consider the restrictions $y:\{n\in L:x_n\in A\}=L_1\rightarrow A,~y_n=x_n;~z:\{n\in L:x_n\in B\}=L_2\rightarrow B,~z_n=x_n.$ Since $(x_n)_{n\in L}$ is an $\mathcal{I}$-nonthin sequence, at least one of $y,z$ is an $\mathcal{I}$-nonthin sequence. Without loss of generality assume that $y$ is an $\mathcal{I}$-nonthin sequence. Let $U$ be an open neighbourhood of $x.$ Then $\{n\in L:x_n\not\in U\}\in {\mathcal{I}/_L}.$ Since $\{n\in L_1:y_n\not\in U\}\subset\{n\in L:x_n\not\in U\},$ it follows that $\{n\in L_1:y_n\not\in U\}\in {\mathcal{I}/_{L_1}}$, which implies $x\in\bar{A}^\mathcal{I}.$ Hence $\overline{A\cup B}^\mathcal{I}=\overline{A}^\mathcal{I}\cup\overline{B}^\mathcal{I}.$
\end{itemize}
Similarly, the operator $A\to\overline{A}^{\mathcal{I}^*}$ fixes the empty set, is expansive and commutes with finite unions.

\begin{note}{\rm\cite{Arkhangelskii}} Since sequential closure operator $A\mapsto\bar{A}$ is not idempotent, the $\mathcal{I}$-closure operator is also not idempotent in general.
Let $A$ be the subset of continuous functions in the space $X=\mathbb{R}^\mathbb{R}$ of all real-valued functions on $\mathbb{R}$ with the topology of pointwise convergence and $\mathcal{I}=$ the collection $\mathcal{I}_0$ of all finite subsets of $\mathbb{N}$. Then ${\overline{A}^\mathcal{I}}=B_1$ is the set of all functions of first Baire class on $\mathbb{R}$ and $\overline{\overline{A}^\mathcal{I}}^\mathcal{I}=B_2$ is the set of all functions of second Baire class on $\mathbb{R}$. Since $B_1\neq B_2$, the $\mathcal{I}$-closure operator is not idempotent in general.
\end{note}

\begin{definition}
 The operator $A\mapsto\overline{A}^\mathcal{I}$ induces a topology on $X,$  denoted by $\tau_\mathcal{I}$ and a subset $A$ of $X$ is closed in $\tau_\mathcal{I}$ if and only if $A=\overline{A}^\mathcal{I}.$ In a similar manner, the operator $A\mapsto\overline{A}^{\mathcal{I}^*}$ induces a topology on $X$ too, which is denoted by $\tau_{\mathcal{I}^*}.$
\end{definition}
\begin{theorem}\label{clequalityinfircou}
Let $X$ be a topological space. For any set $A\subset X,$ $A\subset\overline{A}^{\mathcal{I}^*}\subset\overline{A}^\mathcal{I}\subset\bar{A}.$\\
In addition if $X$ is first countable, $\bar{A}=\overline{A}^\mathcal{I}=\overline{A}^\mathcal{I^*}.$
\end{theorem}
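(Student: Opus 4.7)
The plan is to prove the chain of inclusions from left to right, and then handle the first countable case by closing the chain with the reverse inclusion $\bar A\subset\overline A^{\mathcal I^*}$. All four steps rely only on unpacking the definitions of $\mathcal{I}/_L$-convergence and $(\mathcal{I}/_L)^*$-convergence; the only subtlety is keeping track of which index set is used and ensuring that the $\mathcal{I}$-nonthin condition is maintained.

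For $A\subset\overline A^{\mathcal I^*}$, given $a\in A$ I would use the constant sequence $(a)_{n\in\mathbb{N}}$ on the full index set $L=\mathbb{N}$. Since $\mathcal{I}$ is nontrivial, $\mathbb{N}\notin\mathcal{I}$, so the sequence is $\mathcal{I}$-nonthin, and taking $M=\mathbb{N}\in\mathcal{F}(\mathcal{I}/_L)$ witnesses $(\mathcal{I}/_L)^*$-convergence to $a$. For the third inclusion $\overline A^{\mathcal I}\subset\bar A$, if $x\in\overline A^{\mathcal I}$ is witnessed by an $\mathcal{I}$-nonthin sequence $(x_n)_{n\in L}$ in $A$ and $x\notin\bar A$, then some open $U\ni x$ misses $A$, forcing $\{n\in L: x_n\notin U\}=L$. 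But $L\notin\mathcal{I}$ means $L\notin\mathcal{I}/_L$, contradicting $\mathcal{I}/_L$-convergence. So $x\in\bar A$.

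For the middle inclusion $\overline A^{\mathcal I^*}\subset\overline A^{\mathcal I}$, let $x\in\overline A^{\mathcal I^*}$ be witnessed by an $\mathcal{I}$-nonthin sequence $(x_n)_{n\in L}$ in $A$ with some $M=\{m_1<m_2<\cdots\}$ such that $L\setminus M\in\mathcal{I}/_L$ and $\lim_{k\to\infty}x_{m_k}=x$. I claim the very same sequence $(x_n)_{n\in L}$ serves as a witness for $\overline A^{\mathcal I}$. Indeed, for any open $U\ni x$, ordinary convergence gives some $k_0$ with $x_{m_k}\in U$ for all $k\geq k_0$, so
\[
\{n\in L: x_n\notin U\}\subset (L\setminus M)\cup\{m_1,\dots,m_{k_0-1}\}.
\]
The right side is a union of a member of $\mathcal{I}/_L$ and a finite set (in $\mathcal{I}$ by admissibility), hence lies in $\mathcal{I}$, and being a subset of $L$ it lies in $\mathcal{I}/_L$. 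Thus $(x_n)_{n\in L}$ is $\mathcal{I}/_L$-convergent to $x$, so $x\in\overline A^{\mathcal I}$.

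For the first countable refinement, it suffices to show $\bar A\subset\overline A^{\mathcal I^*}$. Pick a nested countable neighborhood base $U_1\supset U_2\supset\cdots$ at $x\in\bar A$ and choose $x_n\in U_n\cap A$. Then $(x_n)_{n\in\mathbb{N}}$ converges to $x$ in the usual sense, is defined on $L=\mathbb{N}\notin\mathcal{I}$ hence is $\mathcal{I}$-nonthin, and taking $M=\mathbb{N}$ (so $\mathbb{N}\setminus M=\emptyset\in\mathcal{I}/_L$) witnesses $(\mathcal{I}/_L)^*$-convergence. Therefore $x\in\overline A^{\mathcal I^*}$, and combined with the previous inclusions we get equality throughout. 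The only step requiring any real care is the middle inclusion, where one must be careful to bound the exceptional set $\{n\in L:x_n\notin U\}$ inside the ideal $\mathcal{I}/_L$ rather than the original $\mathcal{I}$, which is what the admissibility hypothesis is for.
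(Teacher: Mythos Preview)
Your proof is correct and follows essentially the same approach as the paper: constant sequences for $A\subset\overline A^{\mathcal I^*}$, the standard implication $(\mathcal I/_L)^*$-convergence $\Rightarrow$ $\mathcal I/_L$-convergence for the middle inclusion, and a nonemptiness-of-the-filter argument for $\overline A^{\mathcal I}\subset\bar A$. In the first countable case you close the chain by proving $\bar A\subset\overline A^{\mathcal I^*}$, whereas the paper only writes out $\bar A\subset\overline A^{\mathcal I}$; your version is slightly tidier since it forces all three closures to coincide in one stroke.
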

\begin{proof}Let $a\in\overline{A}^\mathcal{I}$, there exists an $\mathcal{I}$-nonthin sequence $(a_n)_{n\in L}$ in $A$ such that $a_n\to_{\mathcal{I}/_L} a,~a\in X$. Let $U$ be any open set containing $a$, then $\{n\in\mathbb{N}:a_n\in U\}\in\mathcal{F}({\mathcal{I}/_L})$. Since $\emptyset\not\in\mathcal{F}({\mathcal{I}/_L}),$ there exists a natural number $m$ such that  $m\in\{n\in\mathbb{N}:a_n\in U\}$. Then $a_m\in U\cap A$, which implies $a\in\bar{A}$ and so
$\overline{A}^\mathcal{I}\subset\bar{A}.$ In fact, $A\subset\overline{A}^{\mathcal{I}^*}\subset\overline{A}^\mathcal{I}\subset\bar{A}.$\\
Let $x\in \bar{A}.$ Since $X$ is first countable, there exists a sequence $(x_n)$ in $A$ such that $x_n\longrightarrow x.$ As $\mathcal{I}$ is an admissible ideal,  $x_n\longrightarrow_\mathcal{I} x.$ Thus $x\in \overline{A}^\mathcal{I}.$
\end{proof}

 \begin{theorem}\label{topequalityinfircou}
 Let ${\rm(X,\tau)}$ be a topological space. Then both $\tau_\mathcal{I}$ and $\tau_{\mathcal{I}^*}$ are finer than $\tau.$\\
  In addition if $X$ is first countable, $\tau=\tau_\mathcal{I}=\tau_\mathcal{I^*}.$
 \end{theorem}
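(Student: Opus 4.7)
The plan is to deduce everything directly from Theorem \ref{clequalityinfircou}, which sandwiches the new closure operators between the identity and the ordinary closure.

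First, I would establish that $\tau_{\mathcal{I}}$ refines $\tau$ by comparing closed sets. Suppose $F\subset X$ is $\tau$-closed, so that $\bar F=F$. By Theorem \ref{clequalityinfircou} we have the chain
\[
F\;\subset\;\overline{F}^{\mathcal{I}^*}\;\subset\;\overline{F}^{\mathcal{I}}\;\subset\;\bar F\;=\;F,
\]
which forces $\overline{F}^{\mathcal{I}}=F$ and $\overline{F}^{\mathcal{I}^*}=F$. Hence $F$ is closed in both $\tau_{\mathcal{I}}$ and $\tau_{\mathcal{I}^*}$. So every $\tau$-closed set is closed in either induced topology; passing to complements, every $\tau$-open set lies in $\tau_{\mathcal{I}}$ and in $\tau_{\mathcal{I}^*}$. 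This gives $\tau\subset\tau_{\mathcal{I}}$ and $\tau\subset\tau_{\mathcal{I}^*}$.

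For the first countable case, Theorem \ref{clequalityinfircou} already supplies the equality $\bar A=\overline{A}^{\mathcal{I}}=\overline{A}^{\mathcal{I}^*}$ for every $A\subset X$. Consequently the three operators coincide, so a set is closed in $\tau$ if and only if it is closed in $\tau_{\mathcal{I}}$ if and only if it is closed in $\tau_{\mathcal{I}^*}$. Equality of the families of closed sets yields $\tau=\tau_{\mathcal{I}}=\tau_{\mathcal{I}^*}$.

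The only delicate point to articulate is the distinction between ``$A=\overline{A}^{\mathcal{I}}$'' and ``$A$ is $\tau_{\mathcal{I}}$-closed''. Because the $\mathcal{I}$-closure operator is not idempotent in general (as noted just before Definition~2.4), the $\tau_{\mathcal{I}}$-closed sets are, strictly speaking, the fixed points of the Kuratowski operator generated by $A\mapsto\overline{A}^{\mathcal{I}}$, not of $\overline{\cdot}^{\mathcal{I}}$ itself. However, each fixed point of $\overline{\cdot}^{\mathcal{I}}$ is still $\tau_{\mathcal{I}}$-closed, and that weaker statement is all the argument above needs. I do not foresee any further obstacle; once one observes that the sandwich in Theorem~\ref{clequalityinfircou} makes $\tau$-closed sets into fixed points of both new closure operators, both assertions follow immediately.
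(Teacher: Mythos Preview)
Your proof is correct and follows the same route as the paper's own argument: both parts are read off directly from the closure sandwich in Theorem~\ref{clequalityinfircou}. Your closing caveat about non-idempotency is harmless but unnecessary here, since by Definition~2.5 the $\tau_{\mathcal{I}}$-closed sets are \emph{declared} to be precisely the fixed points of $A\mapsto\overline{A}^{\mathcal{I}}$.
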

 \begin{proof}Let $A\subset X$ be closed in $\tau.$ Then $A=\bar{A}.$ Since $A\subset{\overline{A}}^\mathcal{I}$ and  $A\subset\overline{A}^{\mathcal{I}^*}\subset\overline{A}^\mathcal{I}\subset\bar{A},$ implies $\overline{A}^\mathcal{I}= A,$ which shows that $A$ is closed in $\tau_\mathcal{I}.$ Similarly it follows that $A$ is closed in $\tau_{\mathcal{I}^*}$ as well.\\
 Let $A\subset X$ be closed in $\tau_\mathcal{I}.$ Since $X$ is first countable, using Theorem \ref{clequalityinfircou} $A=\bar{A}$ implies that $A$ is closed in $\tau.$
\end{proof}
\begin{note}
If $(X,\tau)$ be a topological space. Then $\tau_\mathcal{I^*}$ is finer than $\tau_\mathcal{I}$.
\end{note}

\begin{note} A first countable  space $(X,\tau)$ is Frechet compact if and only if every infinite subset of $X$ has a limit point in $(X,\tau_\mathcal{I})$, for some admissible ideal $\mathcal{I}$.
\end{note}
\begin{note}
\textup{A consequence of Theorem \ref{topequalityinfircou} is that if $(X,\tau)$ is $T_2$ then $(X,\tau_\mathcal{I})$ and $(X,\tau_{\mathcal{I}^*})$ are also.
}
\end{note}
The following example shows that $\tau_\mathcal{I}$ is strictly finer than $\tau.$
\begin{example}
\textup{Consider the space $X=[1,\omega_1]$, where $\omega_1$ is the first uncountable ordinal and $\tau$ be the order topology on $X$. Let $(x_n)_{n\in L}$ be any $\mathcal{I}$-nonthin sequence in $X\setminus \{\omega_1\}.$ Consider the ordinal $\beta=\underset{n\in L}\cup x_n$, $\beta$ is countable. Now $(\beta,\omega_1]$ is a neighbourhood of $\omega_1$ containing no elements of $(x_n)_{n\in L}$. So $\{n\in L: x_n \not\in (\beta,\omega_1] \}=L\not\in \mathcal{I}.$ This implies no $\mathcal{I}$-nonthin sequence in $X\setminus\{\omega_1\}$ can $\mathcal{I}/_L$-converge to $\omega_1.$ Hence $[1,\omega_1)$ is closed in $(X,\tau_\mathcal{I})$ but not closed in $(X,\tau).$
}
\end{example}

\begin{definition}
A topological space $X$ is $\mathcal{I}$-compact if any $\mathcal{I}$-nonthin sequence $(x_n)_{n\in K}$ in $X$ has an $\mathcal{I}$-nonthin subsequence $(x_n)_{n\in M}$ that $\mathcal{I}/_M$-converges to some point in $X$.
\end{definition}

\begin{definition}
A topological space $X$ is $\mathcal{I}^*$-compact if any $\mathcal{I}$-nonthin sequence $(x_n)_{n\in K}$ in $X$ has an $\mathcal{I}$-nonthin subsequence $(x_n)_{n\in M}$ that $({\mathcal{I}/_M})^*$-converges to some point in $X$.
\end{definition}

\begin{theorem}\label{clsubcmptcl}
Closed subset of an $\mathcal{I}$-compact space is $\mathcal{I}$-compact.
\end{theorem}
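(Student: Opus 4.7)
The plan is to pull back an arbitrary $\mathcal{I}$-nonthin sequence from the closed subset to the ambient space, extract a convergent subsequence using $\mathcal{I}$-compactness of $X$, and then argue that its limit must lie in the closed subset by the $\mathcal{I}$-closure inclusion already established in Theorem \ref{clequalityinfircou}.

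Concretely, let $Y\subset X$ be closed in $(X,\tau)$ and let $(x_n)_{n\in K}$ be any $\mathcal{I}$-nonthin sequence in $Y$. Since $Y\subset X$, this is also an $\mathcal{I}$-nonthin sequence in $X$ (the index set $K\notin \mathcal{I}$ is what matters). Applying $\mathcal{I}$-compactness of $X$ to this sequence, I extract an $\mathcal{I}$-nonthin subsequence $(x_n)_{n\in M}$ (so $M$ is cofinal in $K$ and $M\notin \mathcal{I}$) which $\mathcal{I}/_M$-converges to some point $x\in X$. All that remains is to verify $x\in Y$.

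For the membership step I exploit the definition of $\mathcal{I}$-closure: since $(x_n)_{n\in M}$ is an $\mathcal{I}$-nonthin sequence lying in $Y$ which $\mathcal{I}/_M$-converges to $x$, the point $x$ belongs to $\overline{Y}^{\mathcal{I}}$ by definition. By Theorem \ref{clequalityinfircou}, $\overline{Y}^{\mathcal{I}}\subset \overline{Y}$, and closedness of $Y$ in $\tau$ gives $\overline{Y}=Y$. Chaining these inclusions yields $x\in Y$, so $(x_n)_{n\in M}$ witnesses $\mathcal{I}$-compactness of $Y$.

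I do not anticipate a real obstacle here; the argument is essentially a verification that the definitions cooperate. The only subtlety worth flagging is making sure the subsequence extracted in $X$ is still an admissible witness for $\mathcal{I}$-compactness of $Y$, i.e.\ that $(x_n)_{n\in M}$ is genuinely a subsequence in $Y$ (which is immediate since $M\subset K$ and each $x_n\in Y$) and that it is $\mathcal{I}$-nonthin in the sense of the ideal on $\mathbb{N}$ rather than on $Y$ — a non-issue because the definitions index thinness by the subset of $\mathbb{N}$, independent of where the terms live.
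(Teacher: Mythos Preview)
Your proof is correct and follows essentially the same route as the paper's: extract an $\mathcal{I}$-nonthin $\mathcal{I}/_M$-convergent subsequence using $\mathcal{I}$-compactness of $X$, observe that its limit lies in $\overline{Y}^{\mathcal{I}}$, and then invoke Theorem~\ref{clequalityinfircou} together with closedness of $Y$ to conclude the limit is in $Y$. The paper's version differs only cosmetically, first recording $A=\overline{A}^{\mathcal{I}}$ before extracting the subsequence.
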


\begin{proof}Let $A$ be any closed subset of an $\mathcal{I}$-compact space $X$. By Theorem \ref{clequalityinfircou}, $A\subset\overline{A}^\mathcal{I}$ $\subset\bar{A}$ which implies that $A=\overline{A}^\mathcal{I}$. Since $X$ is $\mathcal{I}$-compact, every $\mathcal{I}$-nonthin sequence $(x_n)_{n\in K}$ in $A$ has an $\mathcal{I}$-nonthin subsequence $(x_n)_{n\in M}$ that $\mathcal{I}/_M$-converges to $x\in X$ which implies that $x\in \overline{A}^\mathcal{I}=A$, that is $A$ is $\mathcal{I}$-compact.
\end{proof}

Proof of the following Theorem \ref{clsubset} is just a translation of the proof of Theorem \ref{clsubcmptcl} in terms of $\mathcal{I}^*.$
\begin{theorem}\label{clsubset}
A closed subset of an $\mathcal{I}^*$-compact space is $\mathcal{I}^*$-compact.
\end{theorem}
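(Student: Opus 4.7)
The plan is to mirror the proof of Theorem \ref{clsubcmptcl} almost verbatim, replacing the $\mathcal{I}$-closure by the $\mathcal{I}^*$-closure and $\mathcal{I}/_M$-convergence by $(\mathcal{I}/_M)^*$-convergence throughout. The two structural ingredients I need are (i) that a closed set coincides with its $\mathcal{I}^*$-closure, which will be extracted from Theorem \ref{clequalityinfircou}, and (ii) the very definition of $\mathcal{I}^*$-compactness applied in the ambient space.

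First I would let $A$ be a closed subset of an $\mathcal{I}^*$-compact space $X$. Theorem \ref{clequalityinfircou} gives the chain $A\subset \overline{A}^{\mathcal{I}^*}\subset \overline{A}^{\mathcal{I}}\subset \bar{A}$, and because $A$ is $\tau$-closed we have $\bar{A}=A$; collapsing the chain yields the key identity $A=\overline{A}^{\mathcal{I}^*}$. This is the trap that will keep any $\mathcal{I}^*$-style limit inside $A$.

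Next I would take an arbitrary $\mathcal{I}$-nonthin sequence $(x_n)_{n\in K}$ in $A$ and, viewing it as a sequence in $X$, apply the hypothesis that $X$ is $\mathcal{I}^*$-compact. This yields a cofinal $M\subset K$ with $M\notin \mathcal{I}$ such that the $\mathcal{I}$-nonthin subsequence $(x_n)_{n\in M}$ is $(\mathcal{I}/_M)^*$-convergent to some $x\in X$. Since every term of this subsequence lies in $A$, the definition of the $\mathcal{I}^*$-closure forces $x\in \overline{A}^{\mathcal{I}^*}$, and by the identity of the previous paragraph we conclude $x\in A$. Hence every $\mathcal{I}$-nonthin sequence in $A$ has an $\mathcal{I}$-nonthin subsequence $(\mathcal{I}/_M)^*$-converging to a point of $A$, establishing $\mathcal{I}^*$-compactness of $A$.

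I do not anticipate any substantive obstacle: the only item worth a brief mental check is that the notion of $\mathcal{I}$-nonthinness of the extracted subsequence is a statement about the index set $M$ and is therefore insensitive to whether we use $\mathcal{I}/_M$- or $(\mathcal{I}/_M)^*$-convergence, which is immediate from the definitions. The argument is essentially a mechanical star-decoration of the earlier proof, as the author themselves remark.
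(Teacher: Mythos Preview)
Your proposal is correct and follows precisely the route the paper indicates: the authors do not write out a separate proof but explicitly state that the argument is ``just a translation of the proof of Theorem~\ref{clsubcmptcl} in terms of $\mathcal{I}^*$,'' which is exactly what you have carried out. The use of the chain $A\subset\overline{A}^{\mathcal{I}^*}\subset\overline{A}^{\mathcal{I}}\subset\bar{A}$ from Theorem~\ref{clequalityinfircou} to force the limit back into $A$ matches the intended translation.
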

\begin{theorem}
Let $(X,\tau)$ be a topological space, then $\mathcal{I}^*$-compactness $\Rightarrow$ $\mathcal{I}$-compactness.
\end{theorem}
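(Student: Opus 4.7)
The plan is to reduce the statement to the well-known fact that $\mathcal{J}^*$-convergence implies $\mathcal{J}$-convergence whenever $\mathcal{J}$ is an admissible ideal, applied on the index set of the chosen subsequence.

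First I would take an arbitrary $\mathcal{I}$-nonthin sequence $(x_n)_{n\in K}$ in $X$. By $\mathcal{I}^*$-compactness there is an $\mathcal{I}$-nonthin subsequence $(x_n)_{n\in M}$ and some $x\in X$ with $(x_n)_{n\in M}\to_{(\mathcal{I}/_M)^*}x$. By definition this means there is $N=\{m_1<m_2<\cdots\}\in\mathcal{F}(\mathcal{I}/_M)$ (that is, $M\setminus N\in \mathcal{I}/_M$) such that $\lim_{k\to\infty}x_{m_k}=x$ in the ordinary sense.

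Next I would verify that $\mathcal{I}/_M$ is itself a nontrivial admissible ideal on $M$: nontriviality is the content of the Note after the definition of $\mathcal{I}$-thinness (using that $M\notin\mathcal{I}$ since $(x_n)_{n\in M}$ is $\mathcal{I}$-nonthin), and admissibility follows because for every $m\in M$ the singleton $\{m\}$ lies in $\mathcal{I}$ and hence $\{m\}=\{m\}\cap M\in\mathcal{I}/_M$. Given any open $U\ni x$, the ordinary convergence $x_{m_k}\to x$ yields $k_0$ with $x_{m_k}\in U$ for all $k\geq k_0$, so
\[
\{n\in M: x_n\notin U\}\subseteq (M\setminus N)\cup\{m_1,\ldots,m_{k_0-1}\}.
\]
The right-hand side is the union of a member of $\mathcal{I}/_M$ and a finite set, hence lies in $\mathcal{I}/_M$ by admissibility. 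Therefore $(x_n)_{n\in M}\to_{\mathcal{I}/_M}x$, and since $(x_n)_{n\in M}$ is already an $\mathcal{I}$-nonthin subsequence of $(x_n)_{n\in K}$, the space $X$ meets the defining condition of $\mathcal{I}$-compactness.

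There is essentially no obstacle here: the argument is a routine transcription of the classical ``$\mathcal{I}^*\Rightarrow\mathcal{I}$'' implication for admissible ideals, with the only point requiring a moment of care being to check that the restricted ideal $\mathcal{I}/_M$ inherits admissibility from $\mathcal{I}$, which is precisely what lets the finite set $\{m_1,\ldots,m_{k_0-1}\}$ be absorbed.
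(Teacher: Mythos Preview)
Your proof is correct and follows essentially the same route as the paper's: pick the $\mathcal{I}$-nonthin subsequence $(x_n)_{n\in M}$ given by $\mathcal{I}^*$-compactness, unpack $(\mathcal{I}/_M)^*$-convergence to get a set $N\in\mathcal{F}(\mathcal{I}/_M)$ along which ordinary convergence holds, and then use the inclusion $\{n\in M:x_n\notin U\}\subseteq (M\setminus N)\cup\{m_1,\ldots,m_{k_0-1}\}$ together with admissibility of $\mathcal{I}/_M$. The only difference is that you make the admissibility of $\mathcal{I}/_M$ explicit, whereas the paper uses it silently when asserting that the finite set union the $\mathcal{I}/_M$-set lies in $\mathcal{I}/_M$.
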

\begin{proof}Let $(x_n)_{n\in L}$ be any $\mathcal{I}$-nonthin sequence in $X$ and since $X$ is $\mathcal{I}^*$-compact, it has an $\mathcal{I}$-nonthin subsequence $(x_n)_{n\in M}$ which is $({\mathcal{I}/_M})^*$-converges to $x.$ So there exists a set $K\in \mathcal{I}/_M$ such that $M\setminus K=\{p_1<p_2<...<p_i<...\}$ with $\lim x_{p_i}=x.$ Now for any open set $U$ containing $x$, there exist $i_0\in \mathbb{N}$ such that $x_{p_i}\in U$ for all $i\geq i_0$. So $\{n\in M; x_n\notin U\}$ $\subset \{p_1,p_2,...p_{i_0}\} \cup K$ $\in \mathcal{I}/_M,$ this implies $(x_n)_{n\in M}$ is $\mathcal{I}/_M$-converges to $x.$
\end{proof}

The following example shows that the reverse implication may not be true.

\begin{example}\label{examp1}
\textup{Let $X=[0,1] $, a subspace of $\mathbb{R}$ with usual topology and $\mathbb{N}=\bigcup_{j=1}^\infty \Delta_j$ be a decomposition of $\mathbb{N}$ such that each $\Delta_j$ is infinite and $\Delta_i\cap\Delta_j=\phi$ for $i\neq j$ (\cite{Kostyrkoa}).
Let $\mathcal{I}=\mathcal{I}_1=$ $\{A\subset \mathbb{N}:$ $A\cap\Delta_i$ is infinite, for finite $i's$ and for other $i's,$ $A\cap\Delta_i$ is finite $\} \cup \mathcal{I}_0$, where $\mathcal{I}_0$ is the class of all finite subsets of $\mathbb{N}$. Clearly $\mathcal{I}$ is a nontrivial admissible ideal. Claim that, $X$ is $\mathcal{I}$-compact. Let $(x_n)_{n\in M}$ be any $\mathcal{I}$-nonthin sequence in $[0,1]$. As in (\cite{Kostyrkob}), let $A_1=\{n\in M; 0\leq x_n\leq\frac{1}{2}\},$ $B_1=\{n\in M; \frac{1}{2}\leq x_n\leq 1\}.$ Then $A_1\cup B_1=M$. Since $M\notin \mathcal{I}$, at least one of them not in $\mathcal{I}$ rename it by $D_1$ and the corresponding interval is $J_1=[a_1,b_1]$ of length $\frac{1}{2}$. So $D_1=\{n\in M ; x_n\in J_1\}\notin \mathcal{I}$. Let $A_2=\{n\in M; x_n\in [a_1, \frac{b_1-a_1}{2}]\}$, $B_2=\{n\in M; x_n\in [\frac{b_1-a_1}{2}, b_1]\}$. Then $A_2\cup B_2=D_1$. Since $D_1\notin \mathcal{I}$, at least one of them not in $\mathcal{I}$ rename it by $D_2$ and the corresponding interval is $J_2=[a_2,b_2]$ of length $\frac{1}{4}$. In the same way we get a sequence of closed intervals $J_1\supset J_2\supset...\supset J_k\supset...,$ where $J_k=[a_k,b_k]$ of length $\frac{1}{2^k}$ and the corresponding sets $D_k=\{n\in M ; x_n\in J_k\}\notin \mathcal{I}$ $(k=1,2,...).$ Therefore there exists a unique $\xi$ such that $\xi\in$${\overset{\infty}{\underset{k=1}\cap}} J_k$. Since $D_1\notin \mathcal{I},$ there exists $K_1\subset D_1$ such that $K_1\cap\Delta_{i_1}$ is infinite say $K_1',$ for some $i_1.$ Similarly
since $D_2\notin \mathcal{I},$ there exists $K_2\subset D_2$ such that $K_2\cap\Delta_{i_2}$ is infinite say $K_2',$ for some $i_2\neq i_1.$
 In the same way for all $n>2$,
since $D_n\notin I,$ there exists $K_n\subset D_n$ such that $K_n\cap\Delta_{i_n}$ is infinite say $K_n',$ for some $i_n\neq i_p, p=1,2,...,n-1$. Let $K={\overset{\infty}{\underset{n=1}\cup}} K_n',$ clearly $K\notin \mathcal{I}.$ We have to show that $(x_n)_{n\in K}\to_{\mathcal{I}/_K} \xi$. Since $\xi\in$${\overset{\infty}{\underset{k=1}\cap}} J_k$, for any $\epsilon>0,$ there exists $m\in\mathbb{N}$ such that $J_m\subset (\xi-\epsilon, \xi+\epsilon).$ Then $\{n\in M ; x_n\notin (\xi-\epsilon, \xi+\epsilon)\}\subset M-D_m$ which implies $ \{n\in K ; x_n\notin (\xi-\epsilon, \xi+\epsilon)\}\subset (M-D_m)\cap K$ $\subset K_1'\cup K_2'\cup...\cup K_{m-1}'$ $\in \mathcal{I}.$ So $\{n\in K ; x_n\notin (\xi-\epsilon, \xi+\epsilon)\}\in \mathcal{I}/_K.$ This implies $(x_n)_{n\in K}$ $\mathcal{I}/_K$-converges to $ \xi$, thus $X$ is $\mathcal{I}$-compact.
Now consider the sequence $(x_n)_{n\in\mathbb{N}}$, where $x_n=\frac{1}{i}, n\in \Delta_i$. Then $(x_n)_{n\in\mathbb{N}}$ have only $\mathcal{I}$-thin convergent subsequences. Thus $X$ is not $\mathcal{I}^*$-compact. Hence, $\mathcal{I}$-compact $\nRightarrow$ $\mathcal{I}^*$-compact.
}
\end{example}

\begin{theorem}\label{seqcom}
Let $(X,\tau)$ be a first countable space, then $\mathcal{I}$-compactness $\Rightarrow$ sequential compactness.
\end{theorem}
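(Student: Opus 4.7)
The plan is to take any sequence, view it as an $\mathcal{I}$-nonthin sequence to invoke $\mathcal{I}$-compactness, and then use first countability to extract an \emph{ordinary} convergent subsequence from the $\mathcal{I}/_M$-convergent subsequence we obtain.

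More precisely, let $(x_n)_{n\in\mathbb{N}}$ be an arbitrary sequence in $X$. Since $\mathcal{I}$ is nontrivial, $\mathbb{N}\notin\mathcal{I}$, so $(x_n)_{n\in\mathbb{N}}$ is $\mathcal{I}$-nonthin. By $\mathcal{I}$-compactness there is an $\mathcal{I}$-nonthin subsequence $(x_n)_{n\in M}$ with $x_n\to_{\mathcal{I}/_M} x$ for some $x\in X$. I now have to produce a subsequence converging to $x$ in the ordinary sense.

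First countability gives a nested local base $U_1\supset U_2\supset\cdots$ at $x$. For each $k$, the definition of $\mathcal{I}/_M$-convergence yields $\{n\in M:x_n\notin U_k\}\in\mathcal{I}/_M$. Setting $A_k=\{n\in M:x_n\in U_k\}$, if $A_k$ were in $\mathcal{I}/_M$ as well, then $M=A_k\cup(M\setminus A_k)$ would lie in $\mathcal{I}/_M$, contradicting the fact (noted after the definition of $\mathcal{I}/_M$) that $\mathcal{I}/_M$ is nontrivial because $M\notin\mathcal{I}$. Hence $A_k\notin\mathcal{I}/_M$, and since $\mathcal{I}$ (and therefore $\mathcal{I}/_M$) is admissible, $A_k$ must be infinite; note also that $A_1\supset A_2\supset\cdots$. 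A routine diagonal/inductive choice then selects $n_1<n_2<\cdots$ with $n_k\in A_k$, so that $x_{n_k}\in U_k$ for every $k$, giving $x_{n_k}\to x$ in $\tau$.

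There is no serious obstacle here: the only subtle point is the justification that each $A_k$ is infinite, which rests on the nontriviality of $\mathcal{I}/_M$ (guaranteed by $\mathcal{I}$-nonthinness of the subsequence) together with admissibility of $\mathcal{I}$. First countability is used exactly once, to replace the topological neighborhood filter at $x$ by a countable decreasing base, making the inductive extraction possible.
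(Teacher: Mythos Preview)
Your proof is correct and follows essentially the same route as the paper's: both use $\mathcal{I}$-compactness to obtain an $\mathcal{I}$-nonthin $\mathcal{I}/_M$-convergent subsequence, then exploit a decreasing countable base at the limit together with nontriviality (and admissibility) of $\mathcal{I}/_M$ to inductively pick indices $n_1<n_2<\cdots$ with $x_{n_k}$ in the $k$th basic neighborhood. You are simply a bit more explicit than the paper in justifying why each $A_k$ is infinite.
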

\begin{proof}Let $(x_n)_{n\in\mathbb{N}}$ be any sequence in $X.$ Then there exists $M\notin \mathcal{I}$ such that $(x_n)_{n\in M}$ $\mathcal{I}/_M$-converges to $x\in X.$ Since $X$  be first countable, let $\{U_n\}$ be a countable base at $x$ and $V_k= \cap_{i\leq k}U_i$, for all $k\in \mathbb{N}$. Then $\{V_n\}$ be a decreasing sequence of open sets each containing $x$. So $\{n\in M ; x_n\notin V_1\}$ $\in \mathcal{I}/_M.$ As $\mathcal{I}/_M$ is nontrivial, there exists $n_1\in M$ such that $n_1\notin$  $\{n\in M ; x_n\notin V_1\}$ this implies $x_{n_1}\in V_1.$ Further there exists $n_2>n_1$, $n_2\in M $ such that $n_2\notin$ $\{n\in M ; x_n\notin V_2\}$ this implies $x_{n_2}\in V_2$ and so on. Thus $(x_n)_{n\in L}$ is a subsequence of $(x_n)_{n\in\mathbb{N}}$, where $L=\{n_1,n_2,...\}$.
Let $U$ be any open set containing $x$, there exists an open set say, $U_m$ in $\{U_n\}$  such that $U_m\subset U$ and hence $V_n\subset U,$ for all $n\geq m$. This implies $x_n\in U$ for all $n\geq m$ and $n\in L$, so $(x_n)_{n\in L}$ converges to $x$.
\end{proof}
\begin{note}
When $\mathcal{D}$ is a free ultrafilter on $\mathbb{N}$, we have Bernstein's notion of $\mathcal{D}$-compactness and $\mathcal{D}$-limit \cite{Allen}. If $\mathcal{I}$ is the dual maximal ideal to the free ultrafilter $\mathcal{D}$, then every compact space is $\mathcal{I}$-compact. So, in general $\mathcal{I}$-compactness does not imply sequential compactness.
\end{note}
The following example shows that the reverse implication may not be true.
\begin{example}\label{examp2}
Let $X=[0,1]$, a subspace of $\mathbb{R}$ with usual topology. If $K\subset\mathbb{N}$, $K_n=\{p\in K; p\leq n\}$, then the natural density of $K$ is defined by $d(K)=\lim \frac{|K_n|}{n}$, if the limit exists  {\rm(\cite{Halberstem},\cite{Niven})}. Let $\mathcal{I}_d=\{A\subset \mathbb{N} ; d(A)=0\}$ {\rm(\cite{Kostyrkoa})}. Let $x=(x_n)_{n\in\mathbb{N}}$ is a sequence $\{0,0,1,0,\frac{1}{2},1,0,\frac{1}{3},\frac{2}{3},1,...\},$ uniformly distributed in $[0,1]$ {\rm(\cite{Kupers})}. So the density of the index of ${x_k}'s$ in any subinterval of length $d$ is $d$ itself. If $(x_n)_{n\in K}$ is a subsequence of $x$ that $\mathcal{I}/_K$-converges to $\xi$ then, $K\in \mathcal{I}$. For, as in  {\rm(\cite{Fridy})} let $\epsilon>0$ be given and for each n, $K_n=\{p\in K; p\leq n\}$ then
$$|K_n|= |\{p\in K_n : |x_p-\xi|<\epsilon\}|+ |\{p\in K_n : |x_p-\xi|\geq\epsilon\}|$$
$$\leq 2\epsilon n + O(1) $$.
This implies $d(K)\leq 2\epsilon$ and since $\epsilon$ arbitrary, $d(K)=0$. So, compactness $\nRightarrow$ $\mathcal{I}$-compactness.
\end{example}

\begin{note} The Example \ref{examp1} and Example \ref{examp2} show that even in metric spaces compactness, $\mathcal{I}$-compactness and $\mathcal{I}^*$-compactness are different. See Figure \ref{fig}.
\end{note}
\begin{figure}[ht]
\begin{tikzpicture}
\node[] at (-1,0.1) {$\mathcal{I}^*-$Compactness};
\draw[thick,->] (0.5,0) -- (2,0);
\draw[thick,<-] (0.5,0.2) -- (2,0.2);
\node[] at (1.25,0.2) {\small /};
\node[] at (3.5,0.1) {$\mathcal{I}-$Compactness};
\node[] at (1.25,0.6) {$\{0,1\}^{\omega},\mathcal{I}_1$};
\draw[thick,->] (-1,-0.3) -- (-1,-1.8);
\draw[thick,<-] (-1.2,-0.3) -- (-1.2,-1.8);
\node[] at (-1.2,-1) {/};
\node[] at (-2,-1) {$[0,1],\mathcal{I}_1$};
\node[] at (-1.1,-2.1) {Compactness};
\draw[thick,<-] (0.1,-2) -- (3.5,-0.2);
\draw [thick,->] (0.2,-2.2) -- (3.6,-0.4);
\node[] at (1.9,-1.3) {/};
\node[] at (2.7,-1.5) {$[0,1],\mathcal{I}_d$};
\end{tikzpicture}
\caption{Relation among compactness, $\mathcal{I}$-compactness and $\mathcal{I}^*$-compactness in metric spaces }
\label{fig}
\end{figure}
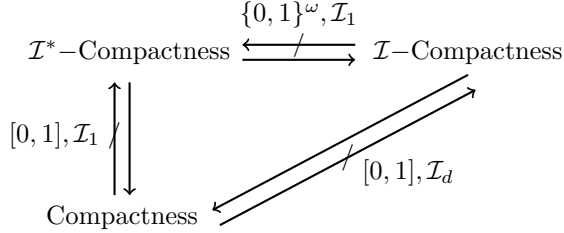

\begin{note}
If $(X,d)$ be an $\mathcal{I}$-compact {\rm(or an $\mathcal{I}^*$-compact)} metric space, then every open cover in $X$ has a Lebesgue number and  the underlying metric space is totally bounded.
\end{note}

\begin{theorem}\label{conimcom}
Continuous image of an $\mathcal{I}$-compact space $(\mathcal{I}^*$-compact space$)$ is $\mathcal{I}$-compact $($resp. $\mathcal{I}^*$-compact$)$.
\end{theorem}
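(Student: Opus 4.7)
The plan is to take any $\mathcal{I}$-nonthin sequence in the image, lift it to an $\mathcal{I}$-nonthin sequence in the domain by choosing preimages, apply $\mathcal{I}$-compactness of the domain to extract an $\mathcal{I}$-nonthin $\mathcal{I}/_M$-convergent subsequence, and then push this subsequence forward through the continuous map. The hypothesis that the index set stays the same in the lifting step is what guarantees that ``nonthinness'' is preserved.

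Concretely, let $f\colon (X,\tau_X)\to (Y,\tau_Y)$ be continuous with $X$ being $\mathcal{I}$-compact, and let $(y_n)_{n\in K}$ be an $\mathcal{I}$-nonthin sequence in $f(X)$, i.e.\ $K\notin\mathcal{I}$. First I would choose, for each $n\in K$, a point $x_n\in X$ with $f(x_n)=y_n$, producing a sequence $(x_n)_{n\in K}$ in $X$ indexed by the same set $K\notin\mathcal{I}$, hence $\mathcal{I}$-nonthin. By $\mathcal{I}$-compactness of $X$, there exist $M\subset K$ with $M\notin\mathcal{I}$ and a point $x\in X$ such that $(x_n)_{n\in M}\to_{\mathcal{I}/_M} x$.

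Next I would verify that continuity transports $\mathcal{I}/_M$-convergence: for any open set $V\subset Y$ containing $f(x)$, the set $f^{-1}(V)$ is open in $X$ and contains $x$, so $\{n\in M:x_n\notin f^{-1}(V)\}\in\mathcal{I}/_M$. But this set coincides with $\{n\in M:f(x_n)\notin V\}=\{n\in M:y_n\notin V\}$, so $(y_n)_{n\in M}\to_{\mathcal{I}/_M} f(x)\in f(X)$. This is the required $\mathcal{I}$-nonthin subsequence, establishing $\mathcal{I}$-compactness of $f(X)$.

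For the $\mathcal{I}^*$ version, the argument runs exactly parallel. From $\mathcal{I}^*$-compactness of $X$ we obtain not only $M\notin\mathcal{I}$ but also a subset $M'\subset M$ with $M\setminus M'\in\mathcal{I}/_M$ such that $(x_n)_{n\in M'}$ converges to $x$ in the ordinary topological sense along $M'$. Ordinary continuity then gives $(f(x_n))_{n\in M'}\to f(x)$ in $Y$, so $(y_n)_{n\in M}$ is $(\mathcal{I}/_M)^*$-convergent to $f(x)\in f(X)$.

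I do not anticipate any serious obstacle; the only points to watch are the (countable) choice of preimages and the fact that the lifting keeps the index set fixed, without which ``nonthin'' could fail to be preserved. Everything else is a direct unpacking of definitions together with the preimage characterization of continuity.
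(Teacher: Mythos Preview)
Your argument is correct and is exactly the routine verification one would expect; the paper itself omits the proof entirely, so there is nothing to compare against beyond noting that your lift--extract--push-forward approach is the standard one the authors presumably had in mind.
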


\begin{proof}
The proof is omitted.
\end{proof}
\begin{theorem}\label{prod}
A sequence $(x_n)_{n\in\mathbb{N}}$ in product space $X=\prod\limits_{\lambda\in\Lambda} X_\lambda$ is $\mathcal{I}$-converges to $x\in X$ if and only if $\pi_\lambda (x_n) \to_\mathcal{I}$ $\pi_\lambda (x), \lambda\in\Lambda$.
\end{theorem}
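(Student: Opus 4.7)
The plan is to reduce both directions to the description of open sets in the product topology, namely that basic open neighbourhoods are finite intersections of projection preimages, together with the fact that $\mathcal{I}$ is closed under finite unions. Admissibility of $\mathcal{I}$ is not needed here; only the ideal axioms enter.

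For the forward direction, I would fix $\lambda\in\Lambda$ and an open $U\subset X_\lambda$ with $\pi_\lambda(x)\in U$. The set $\pi_\lambda^{-1}(U)$ is open in $X$ and contains $x$, so the $\mathcal{I}$-convergence of $(x_n)$ to $x$ gives $\{n\in\mathbb{N}:x_n\notin\pi_\lambda^{-1}(U)\}\in\mathcal{I}$. Since this set coincides with $\{n\in\mathbb{N}:\pi_\lambda(x_n)\notin U\}$, we conclude $\pi_\lambda(x_n)\to_\mathcal{I}\pi_\lambda(x)$ in $X_\lambda$. This step is essentially immediate.

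For the converse, assume $\pi_\lambda(x_n)\to_\mathcal{I}\pi_\lambda(x)$ for every $\lambda\in\Lambda$. To verify $\mathcal{I}$-convergence in $X$, it suffices to show that $\{n:x_n\notin V\}\in\mathcal{I}$ for every basic open neighbourhood $V$ of $x$. Writing $V=\bigcap_{i=1}^{k}\pi_{\lambda_i}^{-1}(U_i)$ with each $U_i$ open in $X_{\lambda_i}$ and $\pi_{\lambda_i}(x)\in U_i$, I would use the set identity
\[
\{n:x_n\notin V\}=\bigcup_{i=1}^{k}\{n:\pi_{\lambda_i}(x_n)\notin U_i\},
\]
whose right-hand side is a finite union of members of $\mathcal{I}$ by hypothesis, hence itself in $\mathcal{I}$ by the ideal axioms.

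I do not anticipate a substantive obstacle. The only mild subtlety is that $\Lambda$ may be uncountable, but basic open neighbourhoods in the product topology are always determined by finitely many coordinates, so the finite-union closure of $\mathcal{I}$ is exactly what the argument needs; nothing like countable additivity of the ideal is required.
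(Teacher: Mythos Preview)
Your proof is correct and follows essentially the same route as the paper: the forward direction uses continuity of projections (you unpack this explicitly, the paper just cites it), and the converse reduces to a basic open neighbourhood determined by finitely many coordinates. The only cosmetic difference is that the paper phrases the converse via the dual filter $\mathcal{F}(\mathcal{I})$ and finite intersections, whereas you work directly with the ideal and finite unions; these are equivalent by complementation.
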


\begin{proof}
Since projection maps are continuous and continuous maps preserve $\mathcal{I}$-convergence, necessary part is trivial.
Let, $x\in \prod\limits_{\lambda\in\Lambda} X_\lambda$ and each $\pi_\lambda (x_n) \to_\mathcal{I}$ $\pi_\lambda (x)$. For any open set $U$ containing $x$, there exists $l_1,l_2,...,l_k\in \Lambda$ and open sets $U_i\subset X_{l_i}$, $i=1,2,...,k$ such that $x\in \pi_{l_1}^{-1}(U_1)\cap\pi_{l_2}^{-1}(U_2)\cap...\cap\pi_{l_k}^{-1}(U_k)$ $\subset U$. This implies $\pi_{l_i}(x)\in U_i, i=1,2,...,k$. Since each $\pi_\lambda (x_n) \to_\mathcal{I}$ $\pi_\lambda (x)$, for open sets $U_i$ containing $\pi_{l_i}(x)$, the set $\{n\in\mathbb{N} ; \pi_{l_i}(x_n)\in U_i\}\in \mathcal{F}(\mathcal{I})$, $i=1,2,...,k$. Again $\{n\in\mathbb{N} ; x_n\in \pi_{l_1}^{-1}(U_1)\}\cap\{n\in\mathbb{N} ; x_n\in \pi_{l_2}^{-1}(U_2)\}\cap...\cap\{n\in\mathbb{N} ; x_n\in \pi_{l_k}^{-1}(U_k)\}$ $\subset\{ n\in\mathbb{N} ; x_n\in U\}$ $\in \mathcal{F}(\mathcal{I})$.
\end{proof}

\begin{theorem}\label{product}
A sequence $(x_n)_{n\in\mathbb{N}}$ in product space $X=\prod\limits_{\lambda\in\Lambda} X_\lambda$ is $\mathcal{I}^*$-converges to $x\in X$ if and only if $\pi_\lambda (x_n) \to_{\mathcal{I}^*}$ $\pi_\lambda (x), \lambda\in\Lambda$.
\end{theorem}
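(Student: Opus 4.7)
The plan is to split into the two implications and parallel the proof of Theorem \ref{prod}, with the crucial difference that $\mathcal{I}^*$-convergence demands a single common witness set $M \in \mathcal{F}(\mathcal{I})$ along which ordinary sequential convergence holds, rather than just a closure property of $\mathcal{F}(\mathcal{I})$ checked neighborhood-by-neighborhood.

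For necessity, I would suppose $x_n \to_{\mathcal{I}^*} x$, so by definition there is some $M = \{m_1 < m_2 < \ldots\} \in \mathcal{F}(\mathcal{I})$ with $\lim_{k \to \infty} x_{m_k} = x$ in $\prod_\lambda X_\lambda$. Since each projection $\pi_\lambda$ is continuous and continuous maps preserve ordinary sequential limits, $\lim_{k \to \infty} \pi_\lambda(x_{m_k}) = \pi_\lambda(x)$ for every $\lambda \in \Lambda$. The very same $M$ then witnesses $\pi_\lambda(x_n) \to_{\mathcal{I}^*} \pi_\lambda(x)$ for each $\lambda$, and this is the direction I would dispatch first since it is purely a continuity argument.

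For sufficiency, suppose $\pi_\lambda(x_n) \to_{\mathcal{I}^*} \pi_\lambda(x)$ for each $\lambda \in \Lambda$, witnessed by some $M_\lambda \in \mathcal{F}(\mathcal{I})$. The relevant observation is that a basic open neighborhood of $x$ in the product topology involves only finitely many coordinates $\lambda_1, \ldots, \lambda_p$, so I would form $M = \bigcap_{i=1}^{p} M_{\lambda_i}$ and note that $\mathbb{N} \setminus M = \bigcup_{i=1}^{p} (\mathbb{N} \setminus M_{\lambda_i})$ lies in $\mathcal{I}$ because $\mathcal{I}$ is closed under finite unions, so $M \in \mathcal{F}(\mathcal{I})$. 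Since $M \subset M_{\lambda_i}$ for each $i$, the coordinate convergences along $M_{\lambda_i}$ restrict to convergences along $M$, and this is enough to conclude $x_{m_k} \to x$ in the product topology.

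The main obstacle to confront is that this finite-intersection trick produces a witness tied to one basic neighborhood at a time, whereas $\mathcal{I}^*$-convergence requires a single uniform $M \in \mathcal{F}(\mathcal{I})$ working for every basic neighborhood at once. When $\Lambda$ is finite the issue disappears: taking $M = \bigcap_{\lambda \in \Lambda} M_\lambda$ handles every basic neighborhood simultaneously. For infinite $\Lambda$ the step is more delicate because $\mathcal{I}$ is only closed under finite unions, and this is the point I would flag as the genuine technical hurdle — either by restricting to finite $\Lambda$, or by invoking an additional hypothesis on $\mathcal{I}$ (such as an AP-style property) that allows the countably many $\mathbb{N} \setminus M_\lambda$ to be absorbed into a single $\mathcal{I}$-set up to finite errors.
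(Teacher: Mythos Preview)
The paper omits the proof of this theorem entirely, so there is no authorial argument to compare against; your proposal has to be judged on its own merits.

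Your necessity direction is correct: a single witness $M \in \mathcal{F}(\mathcal{I})$ for convergence in the product passes to every coordinate by continuity of the projections. Your treatment of sufficiency is also sound as far as it goes, and the obstacle you flag is not merely technical but genuinely fatal to the theorem as stated for infinite $\Lambda$. Concretely, let $\mathcal{I}$ be any admissible ideal lacking property (AP) --- for instance the ideal $\mathcal{I}_1$ of the paper --- and choose pairwise disjoint sets $A_i \in \mathcal{I}$ ($i \in \mathbb{N}$) such that no single $A \in \mathcal{I}$ satisfies $A_i \setminus A$ finite for every $i$ (with $\mathcal{I}_1$ one may take $A_i = \Delta_i$). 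Put $X_i = \{0,1\}$ discrete and define $x_n \in \prod_{i} X_i$ by $\pi_i(x_n) = 1$ if $n \in A_i$ and $0$ otherwise. Then $\pi_i(x_n) \to_{\mathcal{I}^*} 0$ with witness $M_i = \mathbb{N} \setminus A_i$, yet any $M \in \mathcal{F}(\mathcal{I})$ along which $x_n \to (0,0,\ldots)$ in the product topology would force $A_i \cap M$ finite for each $i$, i.e.\ $A_i \setminus (\mathbb{N} \setminus M)$ finite for all $i$, contradicting the choice of the $A_i$. Hence the backward implication fails in general; your instinct to restrict to finite $\Lambda$ or to impose an AP-type condition on $\mathcal{I}$ is exactly the right repair, and the contrast with Theorem~\ref{prod} (where the neighborhood-by-neighborhood filter argument suffices) is precisely the one you identified.
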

\begin{proof}
The proof is omitted.
\end{proof}

\begin{theorem}\label{finite prod}
A finite product of topological spaces is $\mathcal{I}$-compact $(\mathcal{I}^*$-compact$)$ if and only if every factor is $\mathcal{I}$-compact$($resp. $\mathcal{I}^*$-compact$)$.
\end{theorem}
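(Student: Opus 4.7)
The plan is to separate necessity from sufficiency and to run the $\mathcal{I}$ and $\mathcal{I}^*$ arguments in parallel. For necessity I will invoke Theorem \ref{conimcom}: each coordinate projection $\pi_i : X_1 \times \cdots \times X_p \to X_i$ is continuous and (under the standing assumption that the factors are non-empty) surjective, so $\mathcal{I}$-compactness (resp.\ $\mathcal{I}^*$-compactness) of the product transfers at once to each factor.

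For sufficiency in the $\mathcal{I}$-case, my strategy is to extract nested $\mathcal{I}$-nonthin index sets, one coordinate at a time. Starting from an $\mathcal{I}$-nonthin sequence $(x_n)_{n\in K}$ in the product, the projection $(\pi_1(x_n))_{n\in K}$ remains $\mathcal{I}$-nonthin, so $\mathcal{I}$-compactness of $X_1$ yields an $\mathcal{I}$-nonthin $K_1\subset K$ and $y_1\in X_1$ with $(\pi_1(x_n))_{n\in K_1}\to_{\mathcal{I}/_{K_1}}y_1$. Iterating on $X_2,\ldots,X_p$ produces a descending chain $K_p\subset\cdots\subset K_1\subset K$ of $\mathcal{I}$-nonthin sets with limits $y_2,\ldots,y_p$. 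At this point I will invoke a one-line restriction lemma: if $(z_n)_{n\in L}$ $\mathcal{I}/_L$-converges to $z$ and $L'\subset L$ is $\mathcal{I}$-nonthin, then $(z_n)_{n\in L'}$ $\mathcal{I}/_{L'}$-converges to $z$, because $\{n\in L:z_n\notin U\}=A\cap L$ for some $A\in\mathcal{I}$, whence $\{n\in L':z_n\notin U\}=A\cap L'\in\mathcal{I}/_{L'}$. Applying this coordinatewise, each $(\pi_i(x_n))_{n\in K_p}$ $\mathcal{I}/_{K_p}$-converges to $y_i$, and Theorem \ref{prod}, read off on the countable index set $K_p$ with the trace ideal $\mathcal{I}/_{K_p}$ (its proof uses only finite intersections of filter sets, so it goes through verbatim), gives $(x_n)_{n\in K_p}\to_{\mathcal{I}/_{K_p}}(y_1,\ldots,y_p)$.

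The $\mathcal{I}^*$-direction follows the same extraction scheme, but at each stage I further pass from $K_i$ to a subset $M_i\subset K_i$ with $K_i\setminus M_i\in\mathcal{I}$ on which $(\pi_i(x_n))_{n\in M_i}$ converges ordinarily to $y_i$; since $K_i\notin\mathcal{I}$, the set $M_i$ is still $\mathcal{I}$-nonthin, so the next extraction is legal. On the terminal $M_p$ every coordinate converges ordinarily to its $y_i$, and the classical finite-product result produces ordinary convergence of $(x_n)_{n\in M_p}$ to $(y_1,\ldots,y_p)$; taking $M_p$ itself as the witness set (whose complement in $M_p$ is $\emptyset\in\mathcal{I}$) exhibits this as $(\mathcal{I}/_{M_p})^*$-convergence. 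The main obstacle I anticipate is the bookkeeping surrounding the restriction lemma together with the legitimacy of applying Theorem \ref{prod} with a trace ideal --- both technically elementary, but essential to keep the iterated extraction honest and to avoid a circular use of ``subsequence of a convergent sequence is convergent.''
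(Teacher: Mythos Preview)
Your proposal is correct and follows essentially the same approach as the paper: iterated coordinatewise extraction of nested $\mathcal{I}$-nonthin index sets, with the converse coming from Theorem~\ref{conimcom} applied to the projections. The only cosmetic difference is that the paper treats two factors and verifies convergence on the final index set directly via a basic open set $B_1\times B_2$ (which is your restriction lemma and Theorem~\ref{prod} unpacked in one line), while you phrase the same step more modularly and also spell out the $\mathcal{I}^*$ case that the paper leaves to the reader.
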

\begin{proof}Let $X_1$ and $X_2$ be two $\mathcal{I}$-compact spaces and consider two projection maps $\pi_1: X_1 \times X_2\to X_1 $ and $\pi_2: X_1 \times X_2\to X_2 $. Let $(x_n)_{n\in L}$ be any $\mathcal{I}$-nonthin sequence in $X_1 \times X_2$. Then $\pi_1(x_n)={x_n}^1$ $(say)$ and $\pi_2(x_n)={x_n}^2 (say)$  are sequences in $X_1$ and $X_2$ respectively. Since $X_1$ is $\mathcal{I}$-compact, there exists $K\notin \mathcal{I}$ such that $({x_n}^1)_{n\in K} \to_{\mathcal{I}/K} x_1 (say)$ and $K\notin \mathcal{I}$, there exists an $I$-nonthin subsequence $({x_n}^2)_{n\in M}$ of $({x_n}^2)_{n\in K}$ such that $({x_n}^2)_{n\in M} \to_{\mathcal{I}/M} x_2 (say).$ Let $U$ be any open set containing $(x_1,x_2)$, there exists basic open set $B_1 \times B_2\subset U$ containing $(x_1,x_2)$. So $\{k\in M; x_k\notin U\}$ $\subset \{ k\in M; x_k\notin B_1 \times B_2\}$ $\subset \{ k\in M; {x_k}^1\notin B_1\}\cup\{ k\in M; {x_k}^2\notin B_2\}$ $\in \mathcal{I}/_M.$  This shows that $X_1 \times X_2$ is $\mathcal{I}$-compact. Since projection maps are continuous and continuous image of an $\mathcal{I}$-compact space is $\mathcal{I}$-compact, sufficient part is trivial.
The other part of this theorem can be prove in similar way.
\end{proof}

In the following consider two conditions, under one condition $\mathcal{I}$-compactness and $\mathcal{I}^*$-compactness are equivalent in first countable spaces and under another condition compactness and $\mathcal{I}$-compactness are equivalent in metric spaces.
\section{Shrinking Conditions and their applications}
\begin{definition}
An admissible ideal $\mathcal{I}$ is said to satisfy shrinking condition{\rm(A)} if for any sequence of sets $\{A_i\}$ not in $\mathcal{I}$, there exists a sequence of sets $\{B_i\}$ in $\mathcal{I}$ such that each $B_i$ is finite subset of $A_i$ and $B={\overset{\infty}{\underset{i=1}\cup}} B_i\notin \mathcal{I}.$
\end{definition}
The following example is an witness of such ideal.
\begin{example}{\rm(\cite{Kostyrkoa})} \textup{Let $\mathbb{N}={\overset{\infty}{\underset{j=1}\cup}} \Delta_j$ be a decomposition of $\mathbb{N}$ such that each $\Delta_j$ is infinite and $\Delta_i\cap\Delta_j=\phi$ for $i\neq j.$ Let $\mathcal{I}$ denote the class of all $A\subset\mathbb{N}$ which intersect at most finite number of $\Delta_j$s. Then $\mathcal{I}$ is a nontrivial admissible ideal and $\mathcal{I}$ satisfies shrinking condition(A).}
\end{example}

\begin{definition}
An admissible ideal $\mathcal{I}$ is said to satisfy shrinking condition{\rm(B)} {\rm(\cite{Boccuto}Remark 4.15(b))} if for any sequence of sets $\{A_i\}$ not in $\mathcal{I}$, there exists a sequence of sets $\{B_i\}$ in $\mathcal{I}$ such that each $B_i\subset A_i$  and $B={\overset{\infty}{\underset{i=1}\cup}}B_i\notin \mathcal{I}.$
\end{definition}
The following example is an witness of such ideal.

\begin{example}\textup{Let $\mathbb{N}={\overset{\infty}{\underset{j=1}\cup}} \Delta_j$ be a decomposition of $\mathbb{N}$ such that each $\Delta_j$ is infinite and $\Delta_i\cap\Delta_j=\phi$ for $i\neq j.$ Let $\mathcal{I}=$ $\mathcal{I}_0\cup\{A\subset \mathbb{N} :A \cap\Delta_i$ is infinite, for finite $i's$ and for other $i's,$ $A\cap\Delta_i$ is finite $\}$, where $\mathcal{I}_0$ is the class of all finite subsets of $\mathbb{N}$ {\rm(\cite{Kostyrkoa})}.  Then $\mathcal{I}$ is a nontrivial admissible ideal and $\mathcal{I}$ satisfies shrinking condition(B).}
\end{example}

\begin{example}  $\mathcal{I}_d$ satisfies neither shrinking condition$(A)$ nor shrinking condition $(B)$.
\end{example}

\begin{theorem}\label{countprod}
Countable product of $\mathcal{I}$-compact spaces is $\mathcal{I}$-compact if $\mathcal{I}$ satisfy the Shrinking Condition{\rm(B)}.
\end{theorem}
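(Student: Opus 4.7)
The plan is to imitate the classical diagonal argument for sequential compactness of countable products, but with the diagonal subsequence produced by Shrinking Condition (B) rather than by selecting one index per stage. Let $X=\prod_{i=1}^{\infty}X_{i}$ with each $X_{i}$ being $\mathcal{I}$-compact, and let $(x_n)_{n\in L}$ be an arbitrary $\mathcal{I}$-nonthin sequence in $X$, so $L\notin\mathcal{I}$.

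First, I would build a descending chain $L\supseteq L_1\supseteq L_2\supseteq\cdots$ of sets not in $\mathcal{I}$, together with points $x^{k}\in X_{k}$, such that $(\pi_{k}(x_n))_{n\in L_k}$ is $\mathcal{I}/_{L_k}$-convergent to $x^{k}$. Stage $k=1$ comes from applying $\mathcal{I}$-compactness of $X_{1}$ to the $\mathcal{I}$-nonthin sequence $(\pi_{1}(x_n))_{n\in L}$. Given $L_{k-1}$, the sequence $(\pi_k(x_n))_{n\in L_{k-1}}$ is $\mathcal{I}$-nonthin in $X_k$, so $\mathcal{I}$-compactness of $X_k$ yields $L_k\subseteq L_{k-1}$ with $L_k\notin\mathcal{I}$ and $\mathcal{I}/_{L_k}$-limit $x^{k}$. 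Observe that nesting forces $\{n\in L_k:\pi_j(x_n)\notin U\}\subseteq\{n\in L_j:\pi_j(x_n)\notin U\}\in\mathcal{I}$ for every $k\ge j$ and every open $U\ni x^{j}$, so the earlier coordinates continue to converge along every $L_k$.

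Next comes the diagonal step, which is where Shrinking Condition (B) is essential. Applied to the family $\{L_k\}_{k\ge 1}$, it produces subsets $B_k\subseteq L_k$ with each $B_k\in\mathcal{I}$ yet $M:=\bigcup_{k=1}^{\infty}B_k\notin\mathcal{I}$. Thus $(x_n)_{n\in M}$ is an $\mathcal{I}$-nonthin subsequence of the original. Setting $x=(x^{k})_{k\in\mathbb{N}}\in X$, I would verify via Theorem \ref{prod} that $(x_n)_{n\in M}\to_{\mathcal{I}/_{M}}x$ by checking each coordinate. Fix $i$ and an open $U\ni x^{i}$, and split
\[
\{n\in M:\pi_i(x_n)\notin U\}=\bigcup_{k=1}^{\infty}\bigl(B_k\cap\{n\in L_k:\pi_i(x_n)\notin U\}\bigr).
\]
For $k<i$ the $k$-th summand lies in $B_k\in\mathcal{I}$; for $k\ge i$ it lies in $\{n\in L_i:\pi_i(x_n)\notin U\}\in\mathcal{I}$ thanks to the nesting $L_k\subseteq L_i$. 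Hence the whole union is contained in the finite union $B_1\cup\cdots\cup B_{i-1}\cup\{n\in L_i:\pi_i(x_n)\notin U\}$, which lies in $\mathcal{I}$, and so it belongs to $\mathcal{I}/_{M}$.

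The only real obstacle is the diagonal step: in the absence of Shrinking Condition (B), picking one representative from each $L_k$ yields only a countable (hence $\mathcal{I}$-thin, since $\mathcal{I}$ is admissible) set $M$, which is useless. Shrinking Condition (B) is precisely the tool that lets us extract an $\mathcal{I}$-nonthin $M$ whose intersection with any fixed $L_k$ stays in $\mathcal{I}$, which is exactly what is needed to control the $k<i$ tail in the coordinate-wise convergence check. With that in hand, the coordinate estimate above and Theorem \ref{prod} close the argument.
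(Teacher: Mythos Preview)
Your proof is correct and follows essentially the same route as the paper: build a nested chain $L\supseteq L_1\supseteq L_2\supseteq\cdots$ of $\mathcal{I}$-nonthin index sets by applying $\mathcal{I}$-compactness coordinatewise, invoke Shrinking Condition~(B) to extract $B_k\subseteq L_k$ with $B_k\in\mathcal{I}$ and $M=\bigcup_k B_k\notin\mathcal{I}$, and then verify coordinatewise $\mathcal{I}/_M$-convergence via Theorem~\ref{prod}. The only cosmetic difference is bookkeeping: the paper packages the tail $\bigcup_{k\ge i}B_k$ as an auxiliary set $C_i\subseteq A_i$ and writes $K_i=B\setminus C_i=\bigcup_{k<i}B_k$, whereas you split the bad set directly into the $k<i$ and $k\ge i$ pieces; both arrive at the same finite union $B_1\cup\cdots\cup B_{i-1}\cup\{n\in L_i:\pi_i(x_n)\notin U\}\in\mathcal{I}$.
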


\begin{proof}
Let $(X_i)_{i\in\mathbb{N}}$ be a family of $\mathcal{I}$-compact spaces and $(z_i)_{i\in L}$ is a sequence in $X=\prod_{i=1}^{\infty} X_i$, where $z_i=(x_{j}^{i})$. Then $(x_{1}^{n})_{n\in L}$ is an $\mathcal{I}$-nonthin sequence in $X_1$, as $X_1$ is $\mathcal{I}$-compact, there exists an $\mathcal{I}$-nonthin subsequence  $(x_{1}^{n})_{n\in A_1}$ of  $(x_{1}^{n})_{n\in L}$ that $\mathcal{I}/_{A_1}$-converges to $x_1$(say). Since $X_2$ is $\mathcal{I}$-compact, the $\mathcal{I}$-nonthin sequence $(x_{2}^{n})_{n\in A_1}$ in $X_2$ has an $\mathcal{I}$-nonthin subsequence $(x_{2}^{n})_{n\in A_2}$ that $\mathcal{I}/_{A_2}$-converges to $x_2$. Inductively, the $\mathcal{I}$-nonthin sequence $(x_{1}^{n})_{n\in A_{i-1}}$ in $X_i$ has an $\mathcal{I}$-nonthin subsequence $(x_{1}^{n})_{n\in A_i}$ that $\mathcal{I}/_{A_i}$-converges to $x_i, i\geq 2$. Since $A_1\supset A_2\supset...\supset A_k\supset...$ with each $A_i\notin \mathcal{I}$ and $\mathcal{I}$ satisfies the shrinking condition\rm(B), there exists a sequence of sets $\{B_i\}$ in $\mathcal{I}$ with $B_i$ subset of $A_i$ and $B=\bigcup_{i=1}^{\infty} B_i\notin \mathcal{I}$. Let $C_1=B,$ $C_i=B\setminus \bigcup_{j=1}^{i-1} B_j, i\geq 2$.
Then $(x_{i}^{n})_{n\in C_i}$ is a subsequence of $(x_{i}^{n})_{n\in A_i}$. Since each $\pi_i(z_n)$ has $\mathcal{I}$-nonthin $\mathcal{I}/_{C_i}$-convergent subsequence converges to $\pi_i(x)$, so for any open set $U$ containing $x_i$, $\{n\in {C_i} ; x_{i}^{n}\notin U\}\in \mathcal{I}/_{C_i}$ which implies $\{n\in {C_i} ; x_{i}^{n}\notin U\}\in \mathcal{I}/_B$. For each $i$, $K_i=B\setminus C_i \in \mathcal{I}$ and $\{n\in B ; x_{i}^{n}\notin U\}\subset$ $\{n\in {C_i} ; x_{i}^{n}\notin U\}\cup K_i$ $\in \mathcal{I}/_B$, which implies  $\{n\in B ; x_{i}^{n}\notin U\}\in \mathcal{I}/_B$. Therefore each $(\pi_i(z_n))_{n\in L}$ has $\mathcal{I}$-nonthin $\mathcal{I}/_B$-convergent subsequence converges to $\pi_i(x)$. Using Theorem \ref{prod} the sequence $(z_n)_{n\in L}$ has an $\mathcal{I}$-nonthin subsequence $(z_n)_{n\in B}$ which is $\mathcal{I}/_B$-converges to $x$ and so, $X$ is $\mathcal{I}$-compact.
\end{proof}

\begin{corollary}
Countable product of $\mathcal{I}$-compact spaces is $\mathcal{I}$-compact if $\mathcal{I}$ satisfy the Shrinking Condition{\rm(A)}.
\end{corollary}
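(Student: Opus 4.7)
The plan is to derive this corollary as a direct consequence of Theorem \ref{countprod} by showing that Shrinking Condition(A) implies Shrinking Condition(B). First I would compare the two conditions: both start from an arbitrary sequence $\{A_i\}$ of sets not in $\mathcal{I}$ and demand a sequence $\{B_i\}$ in $\mathcal{I}$ with $B_i \subset A_i$ whose union escapes $\mathcal{I}$. Condition (A) additionally insists that each $B_i$ be \emph{finite}, whereas Condition (B) permits arbitrary $B_i \subset A_i$. Since every finite subset is a subset, any witness sequence $\{B_i\}$ produced by Condition (A) for the family $\{A_i\}$ is automatically a witness for Condition (B) for the same family.

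Consequently, if $\mathcal{I}$ satisfies Shrinking Condition(A), it satisfies Shrinking Condition(B) as well. Applying Theorem \ref{countprod} to a countable product of $\mathcal{I}$-compact spaces then yields $\mathcal{I}$-compactness of the product, which is the desired conclusion.

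There is no real obstacle here: the only point worth mentioning is that Condition (A) is logically strictly stronger than Condition (B), so the corollary is essentially an immediate consequence of the preceding theorem and does not require re-running the diagonal selection or the Shrinking argument used in its proof.
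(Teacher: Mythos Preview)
Your argument is correct and matches the paper's intent: the corollary is stated immediately after Theorem~\ref{countprod} with no separate proof, precisely because Shrinking Condition~(A) trivially implies Shrinking Condition~(B) (any finite $B_i\subset A_i$ lies in the admissible ideal $\mathcal{I}$ and is in particular a subset of $A_i$), so Theorem~\ref{countprod} applies directly.
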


The following example shows that countable product of $\mathcal{I}$-compact spaces may not be $\mathcal{I}$-compact.
\begin{example}
Let $X=\prod_{i=1}^{\infty} X_i$, where $X_i=\{0,1\}, i\in\mathbb{N}$ is a topological space with product topology and $\mathcal{I}=\{A\subset\mathbb{N};$ the natural density of $A$ is $0\}$. Clearly $X_i$ is $\mathcal{I}$-compact. Consider a sequence $(z_n)_{n\in\mathbb{N}}$ in $X$, where $(z_n)=(x_m^n)$ defined by
\begin{eqnarray*}
  x_m^n &=& 1, n\in (2m\mathbb{N}+1)\cup(2m\mathbb{N}+2)\cup...\cup(2m\mathbb{N}+m)\cup\{1,2,...,m\} \\
   &=& 0, otherwise
\end{eqnarray*}
since $X_1$ is $\mathcal{I}$-compact, density of the index set of an $\mathcal{I}$-nonthin $\mathcal{I}/_{A_1}$-convergent subsequence $(x_1^n)_{n\in{A_1}}$ of $(x_1^n)_{n\in \mathbb{N}}$ in $X_1$ is less than equal to $1$. so, density of the index set of an $\mathcal{I}$-nonthin $\mathcal{I}/_{A_m}$-convergent subsequence $(x_m^n)_{n\in{A_m}}$ of $(x_m^n)_{n\in{A_{m-1}}}$ in $X_m$ is less than equal to $\frac{1}{2^{m-1}}$. Then $\{A_m\}$ is a decreasing sequence of sets with $d(A_m)\leq \frac{1}{2^{m-1}}$. So for any $\epsilon>0$ and each $m\in \mathbb{N}$, there exists $K_m\in\mathbb{N}$ such that $\frac{\mid{A_m}\cap\{1,2,...,n\}\mid}{n}$ $<\frac{1}{2^{m-1}}+\frac{\epsilon}{3}$, $n\geq K_m$. Let, $\{L_m\}$ be the pairwise disjoint subsets of $\mathbb{N}$ such that $A_{m+1}\subset L_m\subset A_m$, for each m and $d(L_m)=0$. Claim that, $\underset{m\in\mathbb{N}}\cup L_m \in \mathcal{I}$. Since $d(L_m)=0$, for $m\in\{1,2,...,m_0\}$, there exists $n_m\in\mathbb{N}$ such that $\frac{\mid{L_m}\cap\{1,2,...,n\}\mid}{n}<\frac{\epsilon}{3m_0}$, for $n\geq n_m$. Also for that $\epsilon$, there exists $m_0\in\mathbb{N}$ such that $\frac{1}{2^{m_0}}<\frac{\epsilon}{3}$. Let, $l=max\{n_1,n_2,...,n_{m_0},K_{m_0}\}$. Therefore by well known technique, for $n\geq l$, $\frac{\mid({\underset{m\in\mathbb{N}}\cup L_m})\cap\{1,2,...,n\}\mid}{n}$ $={\overset{m_0}{\underset{m=1}\sum}}\frac{\mid L_m\cap\{1,2,...,n\}\mid}{n}$ $+\frac{\mid({\overset{\infty}{\underset{m_0+1}\cup}} L_m)\cap\{1,2,...,n\}\mid}{n}$ $<{\overset{m_0}{\underset{m=1}\sum}}\frac{\epsilon}{3{m_0}}$ $+\frac{\mid A_{m_0+1}\cap\{1,2,...,n\}\mid}{n}<\epsilon$ which implies $d(\underset{m\in\mathbb{N}}\cup L_m )$ $=0$. So, $\underset{m\in\mathbb{N}}\cup L_m \in \mathcal{I}$. If $(z_n)_{n\in\mathbb{N}}$ in $X$ has a subsequence $(z_n)_{n\in K}$ which is $(\mathcal{I}/_K)$-convergent then $K\subset \underset{m\in\mathbb{N}}\cup L_m $ which implies $K\in \mathcal{I}$. Therefore, there does not exist any $\mathcal{I}$-nonthin $\mathcal{I}/_K$-convergent subsequence $(z_n)_{n\in K}$ of $(z_n)_{n\in\mathbb{N}}$. Hence, $X$ is not $\mathcal{I}$-compact.

\end{example}

\begin{theorem}\label{countproda}
Countable product of $\mathcal{I}^*$-compact spaces is $\mathcal{I}^*$-compact if $\mathcal{I}$ satisfy the Shrinking Condition{\rm(A)}.
\end{theorem}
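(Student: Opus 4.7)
The plan is to follow the diagonal-extraction strategy of Theorem~\ref{countprod}, but to work at the level of $\mathcal{I}^*$-convergence and to exploit the \emph{finiteness} of the sets produced by Shrinking Condition~(A). Given $\mathcal{I}^*$-compact spaces $(X_i)_{i\in\mathbb{N}}$ and an $\mathcal{I}$-nonthin sequence $(z_n)_{n\in L}$ in $X=\prod_{i=1}^{\infty}X_i$, I would inductively extract $L\supset A_1\supset A_2\supset\cdots$, each $A_i\notin\mathcal{I}$, so that $(\pi_i(z_n))_{n\in A_i}$ is $(\mathcal{I}/_{A_i})^*$-convergent to some $x_i\in X_i$ at each stage. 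Unpacking the definition, this furnishes sets $M_i\subset A_i$ with $A_i\setminus M_i\in\mathcal{I}$ along which $(\pi_i(z_n))_{n\in M_i}$ converges in the ordinary sense to $x_i$.

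I would then pass to the nested intersections $\widetilde M_i=M_1\cap M_2\cap\cdots\cap M_i$. The crucial estimate is that for $j<i$, the inclusions $M_i\subset A_i\subset A_j$ combined with $A_j\setminus M_j\in\mathcal{I}$ give $M_i\setminus M_j\subset A_j\setminus M_j\in\mathcal{I}$; summing over $j=1,\ldots,i-1$ yields $M_i\setminus\widetilde M_i\in\mathcal{I}$, so each $\widetilde M_i$ remains $\mathcal{I}$-nonthin. Applying Shrinking Condition~(A) to $\{\widetilde M_i\}$ now produces finite sets $B_i\subset\widetilde M_i$ with $B=\bigcup_i B_i\notin\mathcal{I}$. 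Setting $B^{(i)}=\bigcup_{j\geq i}B_j$, one sees that $B\setminus B^{(i)}=B_1\cup\cdots\cup B_{i-1}$ is \emph{finite}, while $B^{(i)}\subset\widetilde M_i\subset M_i$ because the $\widetilde M_j$ are decreasing. Hence $(\pi_i(z_n))_{n\in B^{(i)}}$ is a standard-order subsequence of the ordinarily convergent $(\pi_i(z_n))_{n\in M_i}$ and itself converges ordinarily to $x_i$; restoring the finite set $B\setminus B^{(i)}$ preserves ordinary convergence, so $(\pi_i(z_n))_{n\in B}$ converges ordinarily to $x_i$ for every $i$. Coordinate-wise convergence in the product topology then gives that $(z_n)_{n\in B}$ converges ordinarily to $x=(x_i)$ in $X$, which \emph{a fortiori} is $(\mathcal{I}/_B)^*$-convergence (witnessed trivially by $N=B$). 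Since $B\notin\mathcal{I}$, $(z_n)_{n\in B}$ is the required $\mathcal{I}$-nonthin subsequence.

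The main subtle point is the intersection step: the $M_i$'s themselves are not nested (only the $A_i$'s are), so one cannot apply Condition~(A) directly to them while respecting each coordinate's convergence set. The asymmetric containment $M_i\subset M_j$ modulo~$\mathcal{I}$ for $j\leq i$ is exactly what makes $\widetilde M_i$ well-behaved, and it depends critically on the nesting of the $A_i$'s produced by diagonalization. The reason Condition~(A) is invoked rather than the weaker Condition~(B) that sufficed for Theorem~\ref{countprod} is that finiteness of each $B_i$ forces $B\setminus B^{(i)}$ to be \emph{finite} rather than merely an element of $\mathcal{I}$; this upgrades the coordinate-wise $\mathcal{I}$-convergence available in the $\mathcal{I}$-compact setting to the ordinary convergence required to witness $\mathcal{I}^*$-convergence.
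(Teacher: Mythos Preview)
The paper omits its own proof of Theorem~\ref{countproda}, so there is nothing to compare against directly. Your argument is correct and is the natural $\mathcal{I}^*$-analogue of the paper's proof of Theorem~\ref{countprod}, with one genuine refinement: passing from the convergence witnesses $M_i$ to the nested intersections $\widetilde M_i=M_1\cap\cdots\cap M_i$ before invoking Shrinking Condition~(A). This step is not merely cosmetic. If one applied Condition~(A) to the $M_i$ themselves, the resulting finite $B_j\subset M_j$ for $j\geq i$ need not lie in $M_i$ (only $M_j\setminus M_i\in\mathcal{I}$, which does not force $B_j\subset M_i$), and the tail $B^{(i)}$ could fail to sit inside $M_i$. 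Your observation that $M_i\setminus M_j\subset A_j\setminus M_j\in\mathcal{I}$ for $j<i$, hence $M_i\setminus\widetilde M_i\in\mathcal{I}$ and $\widetilde M_i\notin\mathcal{I}$, is exactly what makes the application of Condition~(A) legitimate while keeping the nesting needed for ordinary coordinate-wise convergence along $B$. The remainder of the argument---finiteness of $B\setminus B^{(i)}$ from Condition~(A), ordinary convergence of each $(\pi_i(z_n))_{n\in B}$, and the trivial $(\mathcal{I}/_B)^*$-witness $N=B$---is routine and in the spirit of the paper's proofs of Theorem~\ref{countprod} and Theorem~\ref{condition}(i).
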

\begin{proof}
The proof is omitted.
\end{proof}

The following example shows that countable product of $\mathcal{I^*}$-compact spaces may not be $\mathcal{I^*}$-compact.

\begin{example}
Let $X=\prod_{i=1}^{\infty} X_i$, where $X_i=\{0,1\}, i\in\mathbb{N}$ is a topological space with product topology and $\mathcal{I}=\{A\subset \mathbb{N}:$ $A\cap\Delta_i$ is infinite, for finite $i's$ and for other $i's,$ $A\cap\Delta_i$ is finite $\} \cup \mathcal{I}_0$, where $\mathcal{I}_0$ is the class of all finite subsets of $\mathbb{N}$ and $\mathbb{N}=\bigcup_{j=1}^\infty \Delta_j$ be a decomposition of $\mathbb{N}$ such that each $\Delta_j$ is infinite and $\Delta_i\cap\Delta_j=\phi$ for $i\neq j$. Then $\mathcal{I}$ is a nontrivial admissible ideal and $\mathcal{I}$ does not satisfy shrinking condition$(A)$. Clearly each $X_i$ is $\mathcal{I}^*$-compact. Consider a sequence $(z_n)_{n\in\mathbb{N}}$ in $X$, where $z_n=(x_i^n)$ defined by
\begin{eqnarray*}
  x_i^n &=& 0, n\in\Delta_1\cup\Delta_2\cup...\Delta_i \\
   &=& 1, otherwise
\end{eqnarray*}

If $(z_n)_{n\in\mathbb{N}}$ in $X$ has an $\mathcal{I}$-nonthin subsequence $(z_n)_{n\in K}$ which is $(\mathcal{I}/_K)^*$-convergent then the fact that, no $\mathcal{I}$-nonthin subsequence of $(x_i^n)_{n\in\mathbb{N}}$ can converge to $0$ ensures that $(z_n)_{n\in K}$ $(\mathcal{I}/_K)^*$-converges to $(1,1,1,...)=\alpha$ \rm(say\rm). Then, there exists a set $M\in \mathcal{F}(\mathcal{I}/_K)$ such that $(z_n)_{n\in M}$ converges to $(1,1,1,...)$. Since $M\in\mathcal{F}(\mathcal{I}/_K)\implies M\notin\mathcal{I}$ that is $M\cap\Delta_i$ is infinite, for infinite $i's$. Let, $M\cap\Delta_i$ is infinite for $i=i_p$ and consider the subsequence $(x_{i_p}^n)_{n\in M}$ of $(x_i^n)_{n\in K}$ in $X_{i_p}$. Since  $M\cap\Delta_{i_p}$ is infinite and
\begin{eqnarray*}
  x_{i_p}^n &=& 0, n\in(\Delta_1\cup\Delta_2\cup...\Delta_{i_p})\cap M \\
   &=& 1, n\in  M-(\Delta_1\cup\Delta_2\cup...\Delta_{i_p})
\end{eqnarray*}
then $\{n\in M; x_{i_p}^n \notin \{1\}\}$ is an infinite set, so $(x_{i_p}^n)_{n\in M}$ cannot converge to 1 at all. Therefore, there does not exist any $\mathcal{I}$-nonthin $(\mathcal{I}/_K)^*$-convergent subsequence $(z_n)_{n\in K}$ of $(z_n)_{n\in\mathbb{N}}$ in $X$ and so, $X$ is not $\mathcal{I}^*$-compact.
\end{example}

\begin{theorem}\label{condition}
 Let $\mathcal{I}$ be an admissible ideal on $\mathbb{N}.$\\
$(i)$ If $\mathcal{I}$ satisfies shrinking condition{\rm(A)} and $(X,\tau)$ is a first countable $\mathcal{I}$-compact space, then $X$ is $\mathcal{I}^*$-compact.\\
$(ii)$ If $\mathcal{I}$ satisfies shrinking condition{\rm(B)} and $(X,d)$ is a compact metric space, then $X$ is $\mathcal{I}$-compact.
\end{theorem}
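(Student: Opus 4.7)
The plan is to exploit the shrinking conditions in both parts to turn an $\mathcal{I}/_K$-limit (or the output of a bisection-by-compactness construction) into a subsequence on which convergence is of the desired strength. The common recipe is this: from a nested family $A_1 \supset A_2 \supset \cdots$ of $\mathcal{I}$-positive index sets that witnesses finer and finer proximity to the candidate limit, extract, via the relevant shrinking condition, pieces $B_i$ (finite under (A), merely in $\mathcal{I}$ under (B)) with $B := \bigcup_i B_i \notin \mathcal{I}$. Because the $A_i$ are decreasing and $B_j \subset A_j \subset A_i$ for $j \geq i$, the residue $B \setminus A_i$ sits inside $B_1 \cup \cdots \cup B_{i-1}$ and so is either finite (under (A)) or a member of $\mathcal{I}$ (under (B)). This single observation powers both arguments.

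For (i), given an $\mathcal{I}$-nonthin $(x_n)_{n \in L}$, first use $\mathcal{I}$-compactness to pass to an $\mathcal{I}$-nonthin subsequence $(x_n)_{n\in K}$ with $\mathcal{I}/_K$-limit $x \in X$. Pick a decreasing countable base $\{U_i\}$ at $x$ from first countability and set $A_i := \{n \in K : x_n \in U_i\}$; the complement of $A_i$ in $K$ lies in $\mathcal{I}$ while $K$ does not, so each $A_i \notin \mathcal{I}$. Apply shrinking condition (A) to $\{A_i\}$ to obtain finite $B_i \subset A_i$ with $B := \bigcup_i B_i \notin \mathcal{I}$. The residue observation then makes $\{n \in B : x_n \notin U_i\}$ finite for every $i$, so the subsequence $(x_n)_{n \in B}$ converges to $x$ in the ordinary sense; taking $M = B$ in the definition of $(\mathcal{I}/_B)^*$-convergence produces an $\mathcal{I}$-nonthin $(\mathcal{I}/_B)^*$-convergent subsequence, so $X$ is $\mathcal{I}^*$-compact.

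For (ii), given an $\mathcal{I}$-nonthin $(x_n)_{n \in L}$ in a compact metric space $X$, use total boundedness to cover $X$ for each $k$ by finitely many sets of diameter less than $1/k$; a pigeonhole refinement over successive covers produces $L \supset A_1 \supset A_2 \supset \cdots$ with each $A_k \notin \mathcal{I}$ and $\mathrm{diam}\{x_n : n \in A_k\} < 1/k$. Compactness of $X$ delivers a single point $x \in \bigcap_k \overline{\{x_n : n \in A_k\}}$, and $d(x, x_n) \leq 1/k$ whenever $n \in A_k$. Apply shrinking condition (B) to $\{A_k\}$ to get $B_k \in \mathcal{I}$ with $B_k \subset A_k$ and $B := \bigcup_k B_k \notin \mathcal{I}$. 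The residue observation now gives $B \setminus A_k \in \mathcal{I}$ for each $k$; picking $k$ with $1/k < \varepsilon$ yields $\{n \in B : d(x_n, x) \geq \varepsilon\} \in \mathcal{I}/_B$, so $(x_n)_{n \in B}$ is an $\mathcal{I}$-nonthin $\mathcal{I}/_B$-convergent subsequence and $X$ is $\mathcal{I}$-compact.

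The only subtle point is the orientation of the shrinking conditions: they must be applied to the $\mathcal{I}$-positive ``capturing'' sets $A_i$, not to their $\mathcal{I}$-small complements. The strength gap between (A) and (B) is what matches the two conclusions: condition (A), forcing the $B_i$ to be finite, upgrades $\mathcal{I}$-convergence to ordinary (hence $\mathcal{I}^*$-) convergence in (i), whereas condition (B), requiring only $B_i \in \mathcal{I}$, is the minimum still sufficient to secure $\mathcal{I}$-convergence in (ii).
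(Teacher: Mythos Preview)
Your proof is correct and follows essentially the same route as the paper's: in both parts you build a nested sequence of $\mathcal{I}$-positive index sets witnessing proximity to the limit (via a decreasing neighborhood base in (i), via successive total-boundedness refinements in (ii)), apply the relevant shrinking condition, and use the residue inclusion $B\setminus A_i\subset B_1\cup\cdots\cup B_{i-1}$ to conclude. The only cosmetic difference is that in (ii) the paper tracks nested closed balls $V^k$ explicitly while you work directly with diameter bounds, but the argument is the same.
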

\begin{proof}$(i)$ Let $(x_n)_{n\in L}$ be any $\mathcal{I}$-nonthin sequence in $X$. Since $X$ is $\mathcal{I}$-compact, there exists an $\mathcal{I}$-nonthin subsequence $(x_n)_{n\in M}$ of $(x_n)_{n\in L}$ which is $\mathcal{I}/_M$ converges to $x\in X$. Since $X$ is first countable, pick a countable base $\{U_n\}$ at $x$. Let $V_n={\overset{n}{\underset{k=1}\cap}} U_k$, then $\{V_n\}$ is a decreasing sequence of open sets each containing $x$. Consider $D_m=\{ n\in M; x_n\in V_m\} \notin \mathcal{I}$. Since $\mathcal{I}$ satisfies shrinking condition{\rm(A)}, there exists a sequence of sets $\{K_i\}$ such that $K_i$ is finite, $K_i\subset D_i$ and $K={\overset{\infty}{\underset{i=1}\cup}} K_i\notin \mathcal{I}$. Claim that, $(x_n)_{n\in K}$ is converges to $x$. Let $U$ be any open set containing $x,$ there exists an open set $U_p \in \{U_n\}$ such that $V_n\subset U_p\subset U , n\geq p.$ So, $\{n\in K; x_n\notin U\}$ $\subset K_1\cup K_2\cup...\cup K_p.$ Let $s=max\{t; t\in K_1\cup K_2\cup...\cup K_p\}.$ Then, $x_n\in U$ for all $n>s, n\in K.$ Hence $X$ is $\mathcal{I}^*$-compact.\\
$(ii)$ Let $(X,d)$ be a compact metric space, so is totally bounded. For $\epsilon=1,$ there exists a finite set $F_1=\{a^1_1,a^1_2,...a^1_{m_1}\}$ such that $X=B_d(a^1_1,1)\cup B_d(a^1_2,1)\cup...\cup B_d(a^1_{m_1},1).$ Let $(x_n)_{n\in M}$ be any $\mathcal{I}$-nonthin sequence in $X$. Consider $A^1_1=\{n\in M; x_n\in \bar B_d(a^1_1,1)\}$, $A^1_2=\{n\in M; x_n\in \bar B_d(a^1_2,1)\}$,..., $A^1_{m_1}=\{n\in M; x_n\in \bar B_d(a^1_{m_1},1)\}$. Then $M=A^1_1\cup A^1_2\cup...\cup A^1_{m_1}$. Since $M\notin \mathcal{I}$, at least one of them not in $\mathcal{I}$ say $A^1$ and the corresponding closed ball say $V^1$ with $diam(V^1)=2$. Since closed ball in a compact metric space is compact, $V^1$ is compact. For $\epsilon=\frac{1}{2}$, there exists a finite set $F_2=\{a^2_1,a^2_2,...a^2_{m_2}\}$ such that $V^1\subset {\underset{a\in F}\cup} B_d(a,\frac{1}{2})$. Consider $A^2_i=\{n\in A^1; x_n\in \bar B_d(a^2_i,\frac{1}{2})\}$, $i=1,2,...,m_2$, so $A^1=A^2_1\cup A^2_2\cup...\cup A^2_{m_2}$. Since $A^1\notin\mathcal{I}$, at least one of them not in $\mathcal{I}$, rename it by $A^2$ and the corresponding closed ball say $U^2$. Thus $U^2\cap V^1=V^2$ is closed in $V^1$ with $diam(V^2)\leq 1$ and hence $V^2$ is closed in $X$. As $X$ is compact, $V^2$ is compact in $X$. Inductively, for $\epsilon=\frac{1}{n}$ we get a sequence of sets $\{A^n\}$ and the corresponding sequence of closed sets $V^1\supset V^2\supset$ ..., such that $diam(V^n)\leq\frac{2}{n}\to 0$ as $n\to \infty.$ Then there exists a unique $\xi$ such that $\xi \in $ ${\overset{\infty}{\underset{k=1}\cap}} V^k.$ Since $\{A^i\}$ decreasing sequence of sets not in $\mathcal{I}$ and $\mathcal{I}$ satisfies shrinking condition{\rm(B)}, there exists a sequence of sets $\{B^i\}$ in $\mathcal{I}$ such that $B^i\subset A^i$ and $B=\underset{i \in\mathbb{N}}\cup B^i\notin \mathcal{I}$. Claim that, $(x_n)_{n\in B}$ is $\mathcal{I}/_B$ converges to $ \xi$. Let $U$ be any open ball containing $\xi,$ there exists $k\in \mathbb{N}$ such that $V^k\subset U.$ Now, $\{n\in M ; x_n\notin U\}\subset \{n\in M ; x_n\notin V^k\} $ which implies $\{n\in B ; x_n\notin U$ $ \}\subset \{n\in M ; x_n\notin V^k\}\cap B$ $\subset B^1\cup B^2\cup...\cup B^K$ $\in \mathcal{I}$ and so, $\{n\in B ; x_n\notin U\}$ $\in \mathcal{I}/_B$. Hence $X$ is $\mathcal{I}$-compact.
\end{proof}\\

\begin{note}
The only condition on $\mathcal{I}$ that will ensure that compactness implies $\mathcal{I}$-compactness is that every $\mathcal{I}$-nonthin set has an $\mathcal{I}$ nonthin subset $M$ so that $\mathcal{I}/_M$ is a maximal ideal. The Stone-Cech compactification of the integers is not $\mathcal{I}$-compact for any ideal that fails to have this property. Also if $\mathcal{I}$-compactness implies $\mathcal{I}^*$-compactness, then every $\mathcal{I}$-nonthin set has an $\mathcal{I}$-nonthin subset $M$ satisfying that $\mathcal{I}/_M$ is the ideal of finite subsets of $M$.
\end{note}

\end{document}